\newtheorem{thm}{Theorem}[section]
\newtheorem{lem}[thm]{Lemma}
\newtheorem{cor}[thm]{Corollary}
\newtheorem{prop}[thm]{Proposition}
\newtheorem{rem}[thm]{Remark}
\newtheorem{rems}[thm]{Remarks}
\theoremstyle{definition}
\theoremstyle{remark}
\numberwithin{equation}{section}
\DeclareMathAlphabet{\mathpzc}{OT1}{pzc}{m}{it}
\newcommand{\N}{\mathbb N}
\newcommand{\R}{\mathbb R}
\newcommand{\ve}{\varepsilon}
\newcommand{\rd}{\mathrm{d}}
\begin{document}

 \author[L.S. Schmitz]{Lina Sophie Schmitz}
\address{Leibniz Universit\"at Hannover,
Institut f\"ur Angewandte Mathematik,
Welfengarten 1,
30167 Hannover,
Germany.}
\email{schmitz@ifam.uni-hannover.de}

\author[Ch. Walker]{Christoph Walker}
\address{Leibniz Universit\"at Hannover,
Institut f\"ur Angewandte Mathematik,
Welfengarten 1,
30167 Hannover,
Germany.}
\email{walker@ifam.uni-hannover.de}

\title[]{Recovering Initial States  in Semilinear Parabolic Problems from Time-Averages}

\begin{abstract}
Well-posedness of certain semilinear parabolic problems with nonlocal initial conditions is shown in time-weighted spaces. The result is applied to recover the initial states in semilinear parabolic problems with nonlinearities of superlinear behavior near zero from small time-averages  over arbitrary time periods.
\end{abstract}

\keywords{Semilinear parabolic equations; initial state recovering; time-weighted spaces.}
\subjclass[2020]{35K58,35Q99}

\maketitle
%%%%%%%%%%%%%%%%%%%%%%%%%%%%%%%%%%%%%%%%%
  %%%%%%%%%%%%%%%%%%%%%%%%%%%%%%%%%%%%%%%%%
  %%%%%%%%%%%%%%%%%%%%%%%%%%%%%%%%%%%%%%%%%
%%%%%%%%%%%%%%%%%%%%%%%%%%%%%%%%%%%%
 %%%%%%%%%%%%%%%%%%%%%%%%%%%%%%%%%%%
 %%%%%%%%%%%%%%%%%%%%%%%%%%%%%%%%%%
\section{Introduction}\label{Sec:1}
%%%%%%%%%%%%%%%%%%%%%%%%%%%%%%%%%%%%%%%%%
  %%%%%%%%%%%%%%%%%%%%%%%%%%%%%%%%%%%%%%%%%
  %%%%%%%%%%%%%%%%%%%%%%%%%%%%%%%%%%%%%%%%%
%%%%%%%%%%%%%%%%%%%%%%%%%%%%%%%%%%%%
 %%%%%%%%%%%%%%%%%%%%%%%%%%%%%%%%%%%
 %%%%%%%%%%%%%%%%%%%%%%%%%%%%%%%%%%

The problem of recovering the unknown initial state $u(0,x)$ in linear uniformly parabolic diffusion equations
\begin{subequations}
\label{DD}
\begin{align}
\partial_t u-\mathrm{div}\big(d(x)\nabla u\big)+c(x) u &= f(t,x)\,,\qquad (t,x)\in (0,T]\times\Omega\,,\\
 u&=0\,,\qquad (t,x)\in (0,T]\times\partial\Omega\,,
\end{align}
from time-averages
\begin{equation}\label{timeav}
a\, u(T,x)+\int_0^T  b(t)\, u(t,x)\,\rd t=M(x)\,,\quad x\in\Omega\,,
\end{equation}
\end{subequations}
is addressed in \cite{Dokuchaev} in the Hilbert space setting of $L_2(\Omega)$. Here, $T \in (0,\infty)$ is a fixed given time. Paraphrasing the main result of \cite[Theorem~1]{Dokuchaev} it is shown therein that this problem admits a unique weak solution $u$ for each given $M\in H^2(\Omega)\cap \mathring{H}^1(\Omega)$ provided that $a\ge0$, $b\ge 0$ with $\mathrm{essinf}_{t\in [0,t_0]} b(t)>0$ for some $t_0\in (0,T)$, and that~$f$ is a (suitable) $L_2$-function. 
 Well-posedness in an $L_p$-setting of a linear fractional diffusion equation with Riemann–Liouville derivative including a time-average condition like~\eqref{timeav} is established in~\cite{TachEtal_MMAS23}. 

We shall consider herein strong solutions to general abstract semilinear problems with nonlinear functions $f=f(u,t)$ not restricting to a Hilbert space framework. More precisely, we focus our attention on semilinear parabolic problems
\begin{equation}\label{E}
u'=Au+f(u,t)\,,\quad t\in(0,T]\,,\qquad a\, u(T)+\int_0^T  b(t)\, u(t)\,\rd t=M\,,
\end{equation}
involving a generator $A$ of an analytic semigroup on a Banach space $E_0$, a nonlinear function \mbox{$f=f(u,t)$}, and a nontrivial weight function $b$. Roughly speaking, we shall show that for  Lipschitz continuous functions $f$ (depending locally or nonlocally on $u$) with superlinear behavior near zero one can recover the (unique) initial state from small values of time-averages over arbitrary time periods. That is, problem~\eqref{E} is well-posed for small values of $M$. The smallness condition is due to the fact that $T>0$ is {\it a priori} fixed and one thus seeks for global solutions to a nonlinear problem. Since $b=0$ yields an ill-posed final value problem  (in the sense that solutions exist only for particular smooth data), we restrict to the case $b\not=0$.

A conceptually different situation than~\eqref{E} occurs if initial and final values are related 
\begin{equation}\label{E100}
u'=Au+f(u,t)\,,\quad t\in(0,T]\,,\qquad u(0)-bu(T)=M\,,
\end{equation}
or if the initial value is related to nonlocal time-averages
\begin{equation}\label{E200}
u'=Au+f(u,t)\,,\quad t\in(0,T]\,,\qquad u(0)+\int_0^T b(t)\, u(t)\,\rd t=M\,.
\end{equation}
 Results regarding solvability, uniqueness, and regularity of solutions to the corresponding linear cases of \eqref{E100}-\eqref{E200} with $f$ independent of $u$ are derived in Hilbert space settings in~\cite{Dokuchaev4,Dokuchaev6,Dokuchaev7}
 and for nonlinear problems in~\cite{TrietEtal_MMAS21} (see also the references therein). In~\cite{Dokuchaev4,Dokuchaev6,Dokuchaev8} also the connection to stochastic differential equations is exploited. Moreover, for recent results regarding final value problems for semilinear (and linear) equations with fractional diffusion and Riemann–Liouville derivative or conformable time derivative we refer e.g. to~\cite{TranEtal_MMAS20,AuEtal_JIEA_20,TrietEtal_MMAS20,TuanEtAl_MMAS20} and the references therein.\\

Despite the conceptual differences, we shall treat problems \eqref{E}, \eqref{E100}, and~\eqref{E200} simultaneously; that is, as special cases of the following semilinear problem with nonlocal initial condition
\begin{equation}\label{EE}
u'=Au+f(u,t)\,,\quad t\in(0,T]\,,\qquad u(0)=\Sigma_T(u)\,,
\end{equation}
where $\Sigma_T(u)$ is a (nonlocal) function of $u$ adapted to the specific problem. Under suitable conditions on the nonlinearities $f$ and $\Sigma_T$ we shall prove that~\eqref{EE} is globally well-posed and implies the well-posedness of each of the problems~\eqref{E}, \eqref{E100}, and~\eqref{E200}.
 As we  shall later see, in the context of problem \eqref{E} the initial condition $\Sigma_T(u)$ belongs merely to the ambient space~$E_0$ and a corresponding solution can thus only be continuous at $t=0$ in the norm of $E_0$. In order to put only mild conditions on the nonlinearity $f$ when investigating problem~\eqref{EE} with  $\Sigma_T(u)\in E_0$, this necessitates the use of a time-weighted space as a phase space for solutions. Such spaces have been used in various research papers in different contexts, e.g. see \cite{PSW18,MW24,Yagi10} and the references therein.

%%%%%%%%%%%%%%%%%%%%%%%%%%
%%%%%%%%%%%%%%%%%%%%%%%%%%
\section{Main Results}
%%%%%%%%%%%%%%%%%%%%%%%%%%
%%%%%%%%%%%%%%%%%%%%%%%%%%

%%%%%%%%%%%%%%%%%%%%%%%%%%
\subsection*{Notations and Assumptions}
%%%%%%%%%%%%%%%%%%%%%%%%%%

Throughout this paper we assume that $E_0$ and $E_1$ are Banach spaces such that $E_1$ is dense and continuously embedded in $E_0$. Moreover, we fix 
\begin{equation}\label{Gen}
A\in\mathcal{H}(E_1,E_0)\,,
\end{equation} 
that is, $A$ is the generator of an analytic semigroup $(e^{tA})_{t\ge 0}$ on $E_0$ with domain $E_1$.\\

Given $\theta\in (0,1)$, let $(\cdot,\cdot)_\theta$ be an arbitrary admissible interpolation functor of 
 exponent $\theta$  (see e.g. \cite[I.Section~2.11]{LQPP}) and set $E_\theta:= (E_0,E_1)_\theta$ for the corresponding interpolation space with norm~$\|\cdot\|_\theta$.\\

 Given $\mu\in \R$ we denote by $C_{\mu}((0,T],E_\theta)$ the Banach space of all functions
$u\in C((0,T],E_\theta)$ such that 
$t^{\mu} u(t)\rightarrow 0$ in $E_\theta$ as $t\rightarrow 0^+$, equipped with the norm
\begin{equation*}
u\mapsto \|u\|_{C_{\mu}((0,T],E_\theta)} := \sup\left\{ t^{\mu}\, \|u(t)\|_\theta \,:\, t\in (0,T]\right\}\,.
\end{equation*}
% Note that 
%\begin{equation}\label{Emb}
%C_{\mu}((0,T],E)\hookrightarrow C_{\nu}((0,T],E)\,,\quad \mu\le \nu\,.
%\end{equation}

In the following, we assume that there are
\begin{subequations}\label{A}
\begin{equation}\label{A1}
\gamma\in [0,1]\,, \qquad \theta\in (0,1]\,,\qquad (\gamma,\theta)\not=(0,1)\,,\qquad \nu \in [0,1)\,,\qquad \ell>0\,,
\end{equation}
for which
$$
f:C_{\theta}\big((0,T],E_\theta\big)\to C_{\nu}\big((0,T],E_\gamma\big)
\qquad \text{and}\qquad \Sigma_T: C_{\theta}\big((0,T],E_\theta\big)\to E_0
$$ 
are Lipschitz mappings satisfying 
\begin{equation}\label{A2}
\|f(v)-f(w)\|_{C_{\nu}((0,T],E_\gamma)}\le c_T\,L^{\ell}\, \|v-w\|_{C_{\theta}((0,T],E_\theta)}\,,\qquad f(0)=0\,,
\end{equation}
\end{subequations}
and
\begin{equation}\label{A3}
\|\Sigma_T(v)-\Sigma_T(w)\|_0\le c_T\,L^{\ell}\, \|v-w\|_{C_{\theta}((0,T],E_\theta)}
\end{equation}
for $L \in (0,1)$ and $v,w\in C_{\theta}((0,T],E_\theta)$ with 
$$
\|v\|_{C_{\theta}((0,T],E_\theta)}+\|w\|_{C_{\theta}((0,T],E_\theta)}\le L\,,\qquad \sup_{t\in (0,T]}\big(\|v(t)\|_0+\|w(t)\|_0\big)\le L\,,
$$
and where $c_T>0$ is a suitable constant.
We use the notation
$$
f(u,t):=f(u)(t)\,,\qquad t\in (0,T]\,,\quad u\in C_{\theta}\big((0,T],E_\theta\big)\,.
$$
Moreover, we write $\mathcal{L}(E,F)$ for the space of bounded linear operators between two Banach spaces $E$ and $F$, while $\mathcal{L}is(E,F)$ stands for the subspace of isomorphisms. By $\mathcal{K}(E)$ we denote the subspace of $\mathcal{L}(E):=\mathcal{L}(E,E)$ consisting of compact operators.\\

Under these assumptions and notations we can state the main results regarding the general problem \eqref{EE} as well as its specifications~\mbox{\eqref{E}-\eqref{E200}}.

%%%%%%%%%%%%%%%%%%%%%%%%%%
\subsection*{Main Result for the General Problem~\eqref{EE}}
%%%%%%%%%%%%%%%%%%%%%%%%%%

Regarding problem~\eqref{EE} with an a priori given final time $T>0$ we prove the following well-posedness result for small values of $\Sigma_T(0)$:

\begin{thm}\label{T1}
Let $T>0$. Assume \eqref{Gen}, \eqref{A}, and \eqref{A3}. Then there is $m(T)>0$ such that~\eqref{EE} has a unique global mild solution
$$
u\in C\big([0,T],E_0\big)\cap C_\theta\big((0,T], E_\theta\big)
$$
provided that $\|\Sigma_T(0)\|_0\le m(T)$. Moreover, if $\gamma>0$, then 
$$
u\in C^1\big((0,T],E_0\big)\cap C\big((0,T], E_1\big)
$$
is a strong solution to~\eqref{EE}.
\end{thm}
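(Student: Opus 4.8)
The plan is to set up a fixed-point argument in the time-weighted space $C_\theta((0,T],E_\theta)$ combined with the requirement of continuity at $t=0$ in $E_0$. First I would recall the variation-of-constants (Duhamel) formulation: a mild solution of~\eqref{EE} is a fixed point of the map
$$
(\Phi u)(t):=e^{tA}\Sigma_T(u)+\int_0^t e^{(t-s)A} f(u,s)\,\rd s\,,\qquad t\in[0,T]\,.
$$
The key analytic input is the standard smoothing estimates for analytic semigroups: for $0\le\theta\le 1$ there are constants so that $\|e^{tA}\|_{\mathcal{L}(E_0,E_\theta)}\le \kappa\, t^{-\theta}$ and, more generally, $\|e^{tA}\|_{\mathcal{L}(E_\gamma,E_\theta)}\le \kappa\, t^{-(\theta-\gamma)}$ for $t\in(0,T]$, together with $\|e^{tA}\|_{\mathcal{L}(E_\theta)}\le\kappa$ uniformly on $[0,T]$. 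I would use these to show that $\Phi$ maps a suitable closed ball $\overline{B}_L$ of $C_\theta((0,T],E_\theta)$ (intersected with the set of functions bounded by $L$ in the $C([0,T],E_0)$-norm) into itself. For the linear term one gets $t^\theta\|e^{tA}\Sigma_T(u)\|_\theta\le \kappa\,\|\Sigma_T(u)\|_0$, and the vanishing of $t^\theta\|e^{tA}\Sigma_T(u)\|_\theta$ as $t\to0^+$ follows because $\Sigma_T(u)\in E_0=\overline{E_1}$ and a density/approximation argument. For the Duhamel term, with $f(u,\cdot)\in C_\nu((0,T],E_\gamma)$, one estimates
$$
t^\theta\Big\|\int_0^t e^{(t-s)A}f(u,s)\,\rd s\Big\|_\theta\le \kappa\, t^\theta\int_0^t (t-s)^{-(\theta-\gamma)} s^{-\nu}\,\rd s\;\|f(u,\cdot)\|_{C_\nu((0,T],E_\gamma)}\,,
$$
and the Beta-function integral $\int_0^t (t-s)^{-(\theta-\gamma)}s^{-\nu}\,\rd s = t^{1-(\theta-\gamma)-\nu}B(1-(\theta-\gamma),1-\nu)$ converges precisely because $\theta-\gamma<1$ (guaranteed since $\theta\le 1$, $\gamma\ge0$, and the excluded case $(\gamma,\theta)=(0,1)$ is exactly the borderline) and $\nu<1$; this produces a factor $c_T\,t^{1+\gamma-\nu}\to0$, giving both the self-mapping and, crucially, a small Lipschitz constant when $T$ (or the ball radius) is small.

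Next I would verify that $\Phi$ is a contraction on $\overline{B}_L$. Using~\eqref{A2} and~\eqref{A3} with the $L^\ell$-factor, for $v,w$ in the ball one obtains
$$
\|\Phi v-\Phi w\|_{C_\theta((0,T],E_\theta)}\le C(T)\,L^\ell\,\|v-w\|_{C_\theta((0,T],E_\theta)}\,,
$$
where $C(T)$ collects $\kappa$, $c_T$, and the Beta-function constant, and similarly one controls the $C([0,T],E_0)$-part of the norm. For the self-mapping step one needs $\Phi(\overline{B}_L)\subset\overline{B}_L$: here $\|\Phi u\|\le C(T)(\|\Sigma_T(0)\|_0 + L^\ell\cdot L)\le C(T)\|\Sigma_T(0)\|_0 + C(T)L^{1+\ell}$, and since $1+\ell>1$, choosing $L$ small makes $C(T)L^{1+\ell}\le L/2$, after which $\|\Sigma_T(0)\|_0\le m(T):=L/(2C(T))$ closes the argument. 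The contraction constant $C(T)L^\ell$ is then $<1$ for $L$ small. Banach's fixed-point theorem yields a unique $u\in\overline{B}_L$; uniqueness in the full space (not just the ball) follows by a standard continuation/Gronwall-type argument or by noting the a priori bound forces any solution into the ball.

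Then I would upgrade regularity. Continuity $u\in C([0,T],E_0)$: the Duhamel term is continuous into $E_0$ up to $t=0$ by dominated convergence (the integrand is dominated using $\gamma\ge0$, $\nu<1$), and $e^{tA}\Sigma_T(u)\to\Sigma_T(u)$ in $E_0$ as $t\to0^+$ by strong continuity of the semigroup, so $u(0)=\Sigma_T(u)$ holds in $E_0$. For the strong-solution claim when $\gamma>0$: here $f(u,\cdot)\in C_\nu((0,T],E_\gamma)$ with $\gamma>0$ means $f(u,\cdot)$ is locally Hölder continuous into $E_0$ on $(0,T]$ (an interpolation/embedding of $E_\gamma$-valued continuity gives Hölder regularity in $E_0$ away from $0$), and then the classical parabolic regularity theory for $u'=Au+g$ with Hölder right-hand side (e.g. as in Lunardi or \cite{LQPP}) gives $u\in C^1((0,T],E_0)\cap C((0,T],E_1)$ solving the equation pointwise on $(0,T]$.

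I expect the main obstacle to be the bookkeeping that makes all the time-weight exponents compatible simultaneously — in particular verifying that the Beta integral converges (which is exactly why the degenerate case $(\gamma,\theta)=(0,1)$ is excluded and why $\nu<1$ is imposed) and tracking that the resulting power of $t$ is strictly positive so that the self-mapping into the \emph{time-weighted} space with the vanishing condition at $t=0$ actually holds; the smallness of $m(T)$ is then an automatic consequence of the superlinearity encoded in $\ell>0$ (giving the $L^{1+\ell}$ term). A secondary technical point is justifying the vanishing $t^\theta\|e^{tA}\Sigma_T(u)\|_\theta\to0$, which requires approximating $\Sigma_T(u)\in E_0$ by elements of $E_1$ and using $\|e^{tA}\|_{\mathcal{L}(E_1,E_\theta)}$ boundedness — standard but needing the density of $E_1$ in $E_0$.
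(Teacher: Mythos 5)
Your fixed-point argument is essentially the paper's proof: the same map $u\mapsto e^{tA}\Sigma_T(u)+\int_0^t e^{(t-s)A}f(u,s)\,\rd s$ on the same ball of $C_\theta((0,T],E_\theta)\cap C([0,T],E_0)$, the same Beta-function bookkeeping with the factor $t^{1+\gamma-\nu}$, the same use of $\ell>0$ to produce the $L^{1+\ell}$ term and hence the smallness threshold $m(T)\sim L/C(T)$, and the same density argument for the vanishing of $t^\theta\|e^{tA}\Sigma_T(u)\|_\theta$. (One cosmetic difference: for a general admissible interpolation functor the estimate $\|e^{tA}\|_{\mathcal{L}(E_\gamma,E_\theta)}\lesssim t^{-(\theta-\gamma)}$ is only available with a slight loss, which is why the paper introduces an auxiliary exponent $\gamma_0<\min\{\gamma,\theta\}$; this does not change anything structurally.)

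There is, however, a genuine flaw in your regularity upgrade. You assert that $f(u,\cdot)\in C_\nu((0,T],E_\gamma)$ with $\gamma>0$ ``means $f(u,\cdot)$ is locally H\"older continuous into $E_0$ on $(0,T]$'' by an interpolation/embedding argument. That implication is false: spatial regularity of the values does not yield temporal H\"older continuity. For instance $g(t)=\phi(t)x_0$ with $x_0\in E_1$ and $\phi$ continuous but nowhere H\"older is in $C([0,T],E_1)$ yet not H\"older into $E_0$. So the ``H\"older right-hand side'' theorem you then invoke does not apply. The correct route --- and the one the paper takes --- is to use the \emph{other} classical sufficient condition for a mild solution of $u_\ve'=Au_\ve+f_\ve$ to be strong, namely $f_\ve\in C([0,T-\ve],E_\gamma)$ with $\gamma>0$ (values in an intermediate space, merely continuous in time; \cite[II.Theorem~1.2.2]{LQPP}), applied to the shifted function $u_\ve:=u(\cdot+\ve)$ to avoid the singularity of $f(u,\cdot)$ at $t=0$, and then let $\ve\to0$. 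With that substitution your argument closes; as written, the step fails.
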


If $T>0$ is chosen small, one can relax assumption \eqref{A2} in the sense that the condition $f(0)=0$ is not required. We state this explicitly:

\begin{rem}\label{R1}
If $f(0)\not=0$ there are $T_0>0$ and $m(T_0)>0$ such that for every $T\in (0,T_0)$ with $\|\Sigma_T(0)\|_0\le m(T_0)$ there is a unique mild solution
$$
u\in C\big([0,T],E_0\big)\cap C_\theta\big((0,T], E_\theta\big)
$$ 
to~\eqref{EE} that is a strong solution if additionally $\gamma>0$. The proof is along the lines of the proof of Theorem~\ref{T1}.
\end{rem}

 It is worth emphasizing that assumption~\eqref{A} on the nonlinearity $f=f(u)$ includes the case of a nonlocal dependence on $u$ with respect to time (see Section~\ref{Sec7} for such examples). Nevertheless, the result also includes the particular case of local time dependence that we state explicitly for completeness. To this end, assume that there are
\begin{subequations}\label{aaa}
\begin{equation}\label{A1Y}
 \ell>0\,,\qquad 0<\theta<\frac{1}{\ell+1}\,,
\end{equation}
such that $f:E_\theta\to E_0$ 
satisfies
\begin{equation}\label{F2}
\|f(v)-f(w)\|_{0}\le c \left(\|v\|_\theta^\ell+\|w\|_\theta^\ell\right) \|v-w\|_{\theta}\,,\qquad v,w\in E_\theta\,,\qquad f(0)=0\,,
\end{equation}
\end{subequations}
for some constant $c>0$.
With the understanding $f(u,t)=f(u(t))$ we then have for this local case: 

\begin{cor}\label{C11}
Let $T>0$. Assume \eqref{Gen},~\eqref{A3}, and \eqref{aaa}. Then there is $m(T)>0$ such that~\eqref{EE} has a unique global strong solution
$$
u\in C^1\big((0,T],E_0\big)\cap C\big((0,T], E_1\big)\cap  C_\theta\big((0,T], E_\theta\big)\cap  C\big([0,T],E_0\big)
$$
provided that $\|\Sigma_T(0)\|_0\le m(T)$.
\end{cor}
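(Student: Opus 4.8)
The plan is to deduce Corollary~\ref{C11} from Theorem~\ref{T1} by verifying that the local nonlinearity $f$ from~\eqref{aaa}, when reinterpreted as a Nemytskii-type operator acting on time-weighted spaces via $f(u)(t):=f(u(t))$, satisfies the abstract hypotheses~\eqref{A} with the parameter choice $\gamma=0$ and $\nu=\ell\theta$. First I would check the parameter constraints in~\eqref{A1}: from~\eqref{A1Y} we have $\theta\in(0,1)$ and $\ell>0$, so $\nu:=\ell\theta$; the condition $\theta<\tfrac{1}{\ell+1}$ is exactly $(\ell+1)\theta<1$, i.e. $\ell\theta<1-\theta<1$, which gives $\nu\in[0,1)$ as required, and $(\gamma,\theta)=(0,\theta)\neq(0,1)$ since $\theta<1$. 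Here $E_\gamma=E_0$, so the target regularity space is $C_\nu((0,T],E_0)$.

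Next I would verify the mapping property and the Lipschitz estimate~\eqref{A2}. Given $v,w\in C_\theta((0,T],E_\theta)$ with norms bounded by $L\in(0,1)$, for each $t\in(0,T]$ one has $\|v(t)\|_\theta\le t^{-\theta}\|v\|_{C_\theta}$ and likewise for $w$, so~\eqref{F2} yields
\begin{equation*}
\|f(v(t))-f(w(t))\|_0\le c\big(\|v(t)\|_\theta^\ell+\|w(t)\|_\theta^\ell\big)\|v(t)-w(t)\|_\theta \le 2c\,t^{-\ell\theta}L^\ell\, t^{-\theta}\|v-w\|_{C_\theta}\,.
\end{equation*}
Multiplying by $t^\nu=t^{\ell\theta}$ gives $t^\nu\|f(v(t))-f(w(t))\|_0\le 2c\,T^{\ell\theta}L^\ell\, t^{-\theta}\|v-w\|_{C_\theta}$. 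This is not yet of the form~\eqref{A2}: the right-hand side still carries the singular factor $t^{-\theta}$, so a naive bound would require $\nu\ge\ell\theta+\theta$, contradicting $\nu<1$ when $\ell$ is large — \emph{this interplay is the main obstacle}. The resolution is that~\eqref{F2} is quadratic-type in $v,w$ rather than merely Lipschitz: one factor of $\|v(t)\|_\theta+\|w(t)\|_\theta$ should be estimated by the smallness $L$ together with the time weight absorbed into the constant, while only the \emph{remaining} factor contributes the $t^{-\theta}$. Concretely, writing one of the $\ell$ powers as $\|v(t)\|_\theta\le t^{-\theta}L$ and keeping the difference factor as $\|v(t)-w(t)\|_\theta\le t^{-\theta}\|v-w\|_{C_\theta}$, the product of these two already produces $t^{-2\theta}$; but we then choose $\nu$ to absorb \emph{both}. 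So the correct choice is $\nu:=(\ell+1)\theta$ provided $(\ell+1)\theta<1$ — which is precisely~\eqref{A1Y} — and then $t^\nu\|f(v(t))-f(w(t))\|_0\le 2cL^\ell\|v-w\|_{C_\theta}$ uniformly in $t$, with $c_T:=2c$ (or $2cT^{\text{(something)}}$ after reconciling powers of $T$). One also checks $t^\nu\|f(v(t))\|_0\to0$ as $t\to0^+$ using $f(0)=0$ and the same estimate with $w=0$, so $f(v)\in C_\nu((0,T],E_0)$, and $f(0)=0$ is inherited directly.

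With~\eqref{A} thus established (and~\eqref{A3} assumed outright), Theorem~\ref{T1} applies verbatim: there is $m(T)>0$ such that $\|\Sigma_T(0)\|_0\le m(T)$ forces a unique global mild solution $u\in C([0,T],E_0)\cap C_\theta((0,T],E_\theta)$. The final point is the strong-solution regularity. Theorem~\ref{T1} only promises $u\in C^1((0,T],E_0)\cap C((0,T],E_1)$ under $\gamma>0$, whereas here $\gamma=0$. However, for the \emph{local} nonlinearity one recovers this by a bootstrap: since $u\in C_\theta((0,T],E_\theta)$ is already known, $t\mapsto f(u(t))=f(u)(t)$ inherits continuity on $(0,T]$ as a map into $E_0$ (indeed into $E_\gamma=E_0$, but now pointwise in $t$ rather than with a weight at $0$), because $f:E_\theta\to E_0$ is locally Lipschitz by~\eqref{F2} and $u\in C((0,T],E_\theta)$. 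Standard analytic-semigroup parabolic regularity for the mild solution $u(t)=e^{(t-s)A}u(s)+\int_s^t e^{(t-\tau)A}f(u(\tau))\,\rd\tau$ on any compact subinterval $[s,T]\subset(0,T]$, together with the Hölder continuity in time that $u$ automatically gains from $u\in C_\theta((0,T],E_\theta)$ and the smoothing of the semigroup, then upgrades $u$ to $C^1((s,T],E_0)\cap C((s,T],E_1)$; letting $s\to0^+$ gives $u\in C^1((0,T],E_0)\cap C((0,T],E_1)$. Collecting all four memberships yields the claimed solution class, completing the proof. I expect the only genuinely delicate bookkeeping to be the matching of powers of $t$ and $T$ in the first step — making sure $\nu=(\ell+1)\theta$ is both admissible (forced by $\theta<1/(\ell+1)$) and sufficient to kill every singular factor — with the regularity bootstrap in the last step being routine once one is on a compact time interval away from $t=0$.
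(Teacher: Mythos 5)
Your proposal is correct and follows essentially the same route as the paper: reinterpret $f$ as a Nemytskii operator via $f(u)(t)=f(u(t))$, verify \eqref{A2} with $\gamma=0$ and $\nu=(\ell+1)\theta<1$ (exactly what \eqref{A1Y} guarantees), invoke Theorem~\ref{T1} for the unique mild solution, and then bootstrap the strong regularity on compact subintervals away from $t=0$. The only point where the paper is more precise is the origin of the temporal H\"older continuity needed for the linear theory: it does not come from membership in $C_\theta((0,T],E_\theta)$ alone, but from first improving $u(\ve)$ to $E_\xi$ with $\xi\in(\theta,1)$ by semigroup smoothing and deducing $u_\ve\in C^{\xi-\theta}([0,T-\ve],E_\theta)$, after which \eqref{F2} transfers this H\"older continuity to $f(u_\ve(\cdot))$ in $E_0$ and the classical result for linear Cauchy problems applies.
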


Note that growth conditions like~\eqref{F2} are common in the context of time-weighted spaces~\cite{PSW18,MW24,Yagi10}. Also note that we obtain in Corollary~\ref{C11} strong solutions even though $f$ maps only into~$E_0$.

\subsection*{Applications to Initial State Recovering Problems}

In order to apply Theorem~\ref{T1} to
 problem~\eqref{E}, we formulate the latter in the form of~\eqref{EE}. To this end, suppose that $u$ is a mild solution to~\eqref{E} for a given $M\in E_1$; that is, 
\begin{equation}\label{vdk}
u(t)=e^{tA}u(0)+\int_0^t e^{(t-s)A} f(u,s)\,\rd s\,,\qquad t\in [0,T]\,.
\end{equation}
Plugging this into the nonlocal condition
$$ 
au(T)+\int_0^T  b(t)\, u(t)\,\rd t=M
$$
 yields
\begin{align*}
M&= a\,e^{TA}u(0)+a\int_0^T e^{(T-s)A} f(u,s)\,\rd s+\int_0^Tb(t)e^{tA}u(0)\,\rd t+\int_0^Tb(t)\int_0^t e^{(t-s)A} f(u,s)\,\rd s\,\rd t\,.
\end{align*}
Hence, introducing the notion
\begin{equation}\label{Phi}
\Phi_T:=a\,e^{TA}+\int_0^Tb(t)\,e^{tA}\,\rd t \in \mathcal{L}(E_0,E_1)
\end{equation}
and 
\begin{equation}\label{Psi}
\Psi_T g:=a\int_0^T e^{(T-s)A} g(s)\,\rd s+ \int_0^Tb(t)\int_0^t e^{(t-s)A} g(s)\,\rd s\,\rd t\,,
\end{equation}
we (formally) obtain that $u(0)$ is given by
\begin{equation}\label{ulin}
u(0)=\Phi_T^{-1}\big(M-\Psi_T(f(u))\big)\,.
\end{equation}
That is, $u$ is a mild solution to~\eqref{E} if and only if $u$ is a mild solution to~\eqref{EE} with
\begin{equation}\label{u0}
\Sigma_T(u)=\Phi_T^{-1}\big(M-\Psi_T(f(u))\big)\,.
\end{equation}
We will investigate properties of the operators $\Phi_T$ and $\Psi_T$ later on. In fact, in order to guarantee the invertibility of $\Phi_T$ under an assumption that is easy to check in applications (see Proposition~\ref{P1} and~\eqref{spect} below)
we assume that
\begin{equation}\label{comp}
\text{$E_1$ is compactly embedded into $E_0$}\,.
\end{equation}
Then we can prove the following well-posedness result for problem~\eqref{E}:

\begin{thm}\label{T0}
Let $T>0$, $a\in \R$, and $b\in C^1([0,T],\R)$ with $b(0)\not=0$ be given. Suppose \eqref{Gen}, \eqref{A} with $\gamma\in (0,1]$, and suppose \eqref{comp}. Moreover, assume that the kernel of 
$\Phi_T \in \mathcal{L}(E_0,E_1)$ defined in~\eqref{Phi}
is trivial, that is,
\begin{equation}\label{spect}
\mathrm{ker}(\Phi_T)=\{0\}\,.
\end{equation}
Then, there is $m(T)>0$ such that, for each $M\in E_1$ with $\|M\|_1\le m(T)$, problem~\eqref{E} has a unique strong solution
$$
u\in C^1\big((0,T],E_0\big)\cap C\big((0,T], E_1\big)\cap C\big([0,T],E_0\big)\cap C_\theta\big((0,T], E_\theta\big)\,.
$$
\end{thm}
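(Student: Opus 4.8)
The plan is to deduce Theorem~\ref{T0} from Theorem~\ref{T1} by verifying that the nonlocal initial operator $\Sigma_T$ defined in~\eqref{u0} satisfies the hypotheses~\eqref{A3} with $\Sigma_T(0)$ small when $M$ is small. The first step is to establish the structural properties of the two linear operators $\Phi_T$ and $\Psi_T$ introduced in~\eqref{Phi}--\eqref{Psi}. For $\Phi_T$, I would show $\Phi_T\in\mathcal{L}(E_0,E_1)$ (using $b\in C^1$, $b(0)\neq 0$, and integration by parts against the analytic semigroup to gain the smoothing into $E_1$ — this is where $b(0)\neq 0$ should enter, since the worst behaviour of $e^{tA}$ is near $t=0$) and, crucially, that $\Phi_T$ is actually invertible. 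Given the compact embedding~\eqref{comp}, $\Phi_T$ viewed as an operator on $E_0$ is compact, so by the Fredholm alternative $\mathrm{ker}(\Phi_T)=\{0\}$ from~\eqref{spect} already forces $\Phi_T\in\mathcal{L}is(E_0,E_0)$; combined with the mapping property into $E_1$ one gets $\Phi_T^{-1}\in\mathcal{L}(E_0,E_1)$. I expect this Fredholm argument to be packaged in the referenced Proposition~\ref{P1}, so I would simply invoke it. For $\Psi_T$, I would estimate $\|\Psi_T g\|_{C_\theta((0,T],E_\theta)}$ — or even $\|\Psi_T g\|_1$ — in terms of $\|g\|_{C_\nu((0,T],E_\gamma)}$, again using the smoothing estimates $\|e^{tA}\|_{\mathcal{L}(E_\gamma,E_\delta)}\le c\,t^{\gamma-\delta}$ for $\delta\ge\gamma$ together with $\gamma>0$ and $\nu<1$ to make the time integrals converge; the outer $\int_0^T b(t)\,\rd t$ factor is harmless since $b$ is bounded.

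The second step is to assemble $\Sigma_T$. Writing $\Sigma_T(u)=\Phi_T^{-1}M-\Phi_T^{-1}\Psi_T(f(u))$, I would check the Lipschitz bound~\eqref{A3}: since $\Phi_T^{-1}\in\mathcal{L}(E_0,E_1)\hookrightarrow\mathcal{L}(E_0)$ and $\Psi_T\in\mathcal{L}(C_\nu((0,T],E_\gamma),E_0)$, the composition $\Phi_T^{-1}\Psi_T$ is a bounded linear map $C_\nu((0,T],E_\gamma)\to E_0$, and then~\eqref{A2} gives
$$
\|\Sigma_T(v)-\Sigma_T(w)\|_0\le \|\Phi_T^{-1}\Psi_T\|\,\|f(v)-f(w)\|_{C_\nu((0,T],E_\gamma)}\le C\,c_T\,L^\ell\,\|v-w\|_{C_\theta((0,T],E_\theta)}\,,
$$
which is exactly~\eqref{A3} (absorbing $C$ into the constant). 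Moreover $\Sigma_T(0)=\Phi_T^{-1}M$ because $f(0)=0$ kills the $\Psi_T$ term, so $\|\Sigma_T(0)\|_0\le\|\Phi_T^{-1}\|_{\mathcal{L}(E_1,E_0)}\,\|M\|_1$ — actually one even gets smallness in the $E_1$-norm if desired. Hence choosing $\|M\|_1\le m(T)$ small enough (with $m(T)$ the threshold from Theorem~\ref{T1} divided by $\|\Phi_T^{-1}\|$) ensures $\|\Sigma_T(0)\|_0$ lies below the threshold in Theorem~\ref{T1}.

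The third step is the translation back: Theorem~\ref{T1}, applicable since $\gamma\in(0,1]$, yields a unique mild solution $u\in C([0,T],E_0)\cap C_\theta((0,T],E_\theta)$ of~\eqref{EE} with this $\Sigma_T$, which is a strong solution because $\gamma>0$, giving the regularity $u\in C^1((0,T],E_0)\cap C((0,T],E_1)$. By the computation preceding~\eqref{ulin} — which must be checked to be reversible — a function $u$ is a mild solution of~\eqref{EE} with $\Sigma_T$ as in~\eqref{u0} if and only if it is a mild solution of~\eqref{E} for the given $M$; here one needs that $u(0)=\Sigma_T(u)$ together with the variation-of-constants formula~\eqref{vdk} implies the time-average identity $au(T)+\int_0^T b(t)u(t)\,\rd t=M$, which follows by applying $a\,e^{TA}\cdot+\int_0^T b(t)e^{tA}\cdot\,\rd t=\Phi_T$ to~\eqref{vdk} and recognizing the remaining terms as $\Psi_T(f(u))$. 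Uniqueness for~\eqref{E} transfers from uniqueness for~\eqref{EE}.

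The main obstacle I anticipate is the first step: proving that $\Phi_T$ genuinely maps $E_0$ into $E_1$ with the right norm bound, and that the Fredholm-alternative argument for invertibility goes through cleanly. The delicate point is the integrability of $t\mapsto b(t)e^{tA}$ as an $E_1$-valued object near $t=0$, where $\|e^{tA}\|_{\mathcal{L}(E_0,E_1)}$ blows up like $t^{-1}$; one integration by parts moves the derivative onto $b$ (legitimate since $b\in C^1$) and turns $\partial_t e^{tA}=Ae^{tA}$ into the integrable object, with the boundary term at $t=0$ controlled precisely because $b(0)\neq 0$ is allowed but the term $b(0)e^{0\cdot A}=b(0)\,\mathrm{Id}$ lands in $\mathcal{L}(E_1)$ after composing appropriately — I would expect this, and the accompanying compactness/Fredholm argument, to already be recorded in Proposition~\ref{P1}, so in the actual write-up one simply cites it. Everything after that is routine operator-norm bookkeeping and the standard mild-to-strong regularity upgrade from Theorem~\ref{T1}.
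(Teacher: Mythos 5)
Your overall strategy is the paper's: recast \eqref{E} as \eqref{EE} with $\Sigma_T$ from \eqref{u0}, verify \eqref{A3} together with $\Sigma_T(0)=\Phi_T^{-1}M$ and $\|\Sigma_T(0)\|_0\le\|\Phi_T^{-1}\|_{\mathcal{L}(E_1,E_0)}\|M\|_1$, and invoke Theorem~\ref{T1}. Your second and third steps match the paper's proof (via Lemma~\ref{L1}, which establishes $\Psi_T\in\mathcal{L}\big(C_\nu((0,T],E_\gamma),E_1\big)$; note the target space really must be $E_1$ rather than $E_0$, since $\Phi_T^{-1}$ is only defined on $E_1=\mathrm{ran}(\Phi_T)$, so your later downgrade of $\Psi_T$ to a map into $E_0$ should be avoided).

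There is, however, a genuine error in your first step. You claim that, by \eqref{comp}, $\Phi_T$ is compact on $E_0$ and that ``by the Fredholm alternative $\mathrm{ker}(\Phi_T)=\{0\}$ already forces $\Phi_T\in\mathcal{L}is(E_0,E_0)$.'' This is false: the Fredholm alternative applies to operators of the form $\lambda\,\mathrm{Id}-K$ with $K$ compact and $\lambda\not=0$, not to compact operators themselves. An injective compact operator on an infinite-dimensional Banach space is never surjective (otherwise the open mapping theorem would force the image of the unit ball to contain a ball, contradicting compactness), so the compactness of $\Phi_T$ works \emph{against} you here and injectivity alone yields nothing. The paper's argument (Proposition~\ref{P1}) first manufactures the needed identity part: integration by parts gives
$$
A\Phi_T x=\big(aA+b(T)\big)e^{TA}x-\int_0^T b'(t)\,e^{tA}x\,\rd t-b(0)x\,,
$$
so that for $\mu\in\rho(A)$ one has $(A-\mu)\Phi_T=K_\mu-b(0)\,\mathrm{Id}$ with $K_\mu\in\mathcal{L}(E_0,E_\vartheta)\subset\mathcal{K}(E_0)$ for some $\vartheta\in(0,1)$. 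Since $b(0)\not=0$, Riesz--Schauder makes $(A-\mu)\Phi_T$ a Fredholm operator of index zero; \eqref{spect} gives its injectivity, hence $(A-\mu)\Phi_T\in\mathcal{L}is(E_0)$ and $\Phi_T=(A-\mu)^{-1}(A-\mu)\Phi_T\in\mathcal{L}is(E_0,E_1)$. This is also where $b(0)\not=0$ genuinely enters --- as the coefficient of the identity in the compact perturbation --- not, as you suggest, in controlling a boundary term of the semigroup near $t=0$. With this correction the rest of your proposal goes through as in the paper.
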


 Before continuing let us remark the following about the linear case when $f$ is independent of $u$:

\begin{rems}\label{R2}
{\bf(a)} In the linear case that $f$ is independent of $u$, there is no smallness condition required for $M\in E_1$ in order to obtain the existence of a unique solution to problem~\eqref{E} since \eqref{ulin} determines the initial state $u(0)$ uniquely for each $M\in E_1$. We therefore provide an alternative proof of \cite[Theorem~1]{Dokuchaev} (in fact, we prove a more general result). Also note that parabolic regularity enforces $M\in E_1$ since $\Phi_T$ maps into $E_1$ (see Proposition~\ref{P1}).\vspace{2mm}

{\bf(b)}  In \cite{Dokuchaev}, also conditions of the form
$$
a u(T)+b\int_0^\ve u(t)\,\rd t=M
$$
with $0<\ve<T$ are  considered, for which a similar result as in Theorem~\ref{T0} can be obtained along the same lines.\vspace{2mm}
\end{rems}

Let us also comment on the premises of Theorem~\ref{T0}.

\begin{rem}\label{R11} 
The condition $b(0)\not=0$ is also imposed in~\cite{Dokuchaev} for the corresponding linear problem (see Condition~1 therein)  and guarantees that certain information on the initial state is contained in the weighted time-average. Together with condition~\eqref{spect}  it implies that $\Phi_T\in \mathcal{L}is(E_0,E_1)$, see Proposition~\ref{P1}. If $A$ represents a symmetric,  elliptic differential operator, condition~\eqref{spect}  is not difficult to verify, see Corollary~\ref{C1} and Corollary~\ref{C2} in Section~\ref{Sec6}. 

In the particular case that $a=0$ and $b=const\not=0$, condition~\eqref{spect} is implied by
\begin{equation*}%\label{spectE100}
\sigma(A)\cap \frac{2\pi i}{T}\mathbb{Z}=\emptyset
\end{equation*}
owing to the fact that 
$$
A\Phi_T z=b\int_0^TAe^{tA}z\,\rd t=b\big(e^{TA}-1\big)z\,,\quad z\in E_0\,,
$$
and $\sigma\big(e^{TA}\big)\setminus\{0\}=e^{T\sigma(A)}$ by the spectral mapping theorem~\cite[V.Corollary~2.10]{EngelNagel}.
\end{rem}

As pointed out in the introduction, our result also applies to problem~\eqref{E100} and problem~\eqref{E200}. The statements are similar, we thus combine them in one theorem:

\begin{thm}\label{TE100}
Assume \eqref{Gen} and \eqref{comp}.
Let $T>0$ and let $f$ satisfy~\eqref{A}. \vspace{2mm} 

\begin{itemize}
\item[{\bf (i)}] For problem~\eqref{E100} assume $b\in \R\setminus\{0\}$ and the spectral condition
\begin{equation}\label{spectE100x}
1\notin b\, e^{T\sigma(A)}\,.
\end{equation}

\item[{\bf (ii)}] For problem~\eqref{E200} assume $b\in C([0,T],\R)$ with $b\not=0$ and the spectral condition
\begin{equation}\label{spectE200}
-1\notin\sigma_p\left(\int_0^T b(t)\,e^{tA}\,\rd t\right)\,.
\end{equation}
\end{itemize}
Then, there is $m(T)>0$ such that, for each $M\in E_0$ with $\|M\|_0\le m(T)$,  problem~\eqref{E100} respectively problem~\eqref{E200} has a unique mild  solution
$$
u\in C\big([0,T],E_0\big)\cap C_\theta\big((0,T], E_\theta\big)\,.
$$
In addition, if  $\gamma>0$  or if $f$ satisfies~\eqref{aaa}, then $$u\in C^1\big((0,T],E_0\big)\cap C\big((0,T], E_1\big)$$ is a strong solution.
\end{thm}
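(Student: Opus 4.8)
The plan is to reduce both problems to the general nonlocal problem~\eqref{EE} and then invoke Theorem~\ref{T1}, exactly as was done for~\eqref{E} via the identity~\eqref{u0}. The core of the argument is to show that under the stated spectral conditions the relevant linear operator is invertible, so that the initial condition can be written as a Lipschitz function $\Sigma_T$ of $u$ with $\Sigma_T(0)$ controlled by $\|M\|_0$.

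\textbf{Reduction to~\eqref{EE}.} For problem~\eqref{E100}, I would insert the mild-solution formula $u(t)=e^{tA}u(0)+\int_0^t e^{(t-s)A}f(u,s)\,\rd s$ into $u(0)-bu(T)=M$. This gives $(1-b\,e^{TA})u(0)=M+b\int_0^T e^{(T-s)A}f(u,s)\,\rd s$, so one sets $\Sigma_T(u):=(1-b\,e^{TA})^{-1}\big(M+b\int_0^T e^{(T-s)A}f(u,s)\,\rd s\big)$. For problem~\eqref{E200}, inserting the same formula into $u(0)+\int_0^T b(t)u(t)\,\rd t=M$ yields $\big(1+\int_0^T b(t)e^{tA}\,\rd t\big)u(0)=M-\int_0^T b(t)\int_0^t e^{(t-s)A}f(u,s)\,\rd s\,\rd t$, i.e.\ $\Sigma_T(u):=\big(1+B_T\big)^{-1}\big(M-\wt\Psi_T(f(u))\big)$ with $B_T:=\int_0^T b(t)e^{tA}\,\rd t$ and $\wt\Psi_T$ the second summand of $\Psi_T$ from~\eqref{Psi}. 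In both cases $\Sigma_T$ maps $C_\theta((0,T],E_\theta)$ into $E_0$, is Lipschitz on bounded sets (using that $f$ satisfies~\eqref{A2}, that $g\mapsto \int_0^T e^{(T-s)A}g(s)\,\rd s$ and $\wt\Psi_T$ are bounded from $C_\nu((0,T],E_\gamma)$ into $E_0$, and that the inverse operators are bounded), with Lipschitz constant of the form $c_T L^\ell$; and $\|\Sigma_T(0)\|_0\le C\|M\|_0$ because $f(0)=0$ (or, in the $f(0)\ne0$ case covered by~\eqref{aaa}, one argues as in Remark~\ref{R1}). Thus~\eqref{A3} holds, and Theorem~\ref{T1} applies with $m(T)$ replaced by $m(T)/C$, giving the asserted regularity $u\in C([0,T],E_0)\cap C_\theta((0,T],E_\theta)$, upgraded to a strong solution when $\gamma>0$; the case where $f$ satisfies~\eqref{aaa} instead is handled via Corollary~\ref{C11} in the same manner.

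\textbf{Invertibility of the linear operators.} This is the step requiring the spectral hypotheses. For~\eqref{E100}, by the spectral mapping theorem~\cite[V.Corollary~2.10]{EngelNagel} one has $\sigma(e^{TA})\setminus\{0\}=e^{T\sigma(A)}$, so condition~\eqref{spectE100x}, namely $1\notin b\,e^{T\sigma(A)}$, gives $1\notin\sigma(b\,e^{TA})$ (the value $1$ cannot come from $0\in\sigma(e^{TA})$ since $b\ne0$), hence $1-b\,e^{TA}\in\mathcal{L}is(E_0)$; since $e^{TA}\in\mathcal{L}(E_0,E_1)$ by analyticity, one moreover has $(1-b\,e^{TA})^{-1}M\in E_1$ whenever $M\in E_1$, but since here $M\in E_0$ only, the inverse acts on $E_0$. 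For~\eqref{E200}, the operator $B_T=\int_0^T b(t)e^{tA}\,\rd t$ lies in $\mathcal{L}(E_0,E_1)$ and, by~\eqref{comp}, is therefore \emph{compact} on $E_0$; consequently $1+B_T$ is a compact perturbation of the identity, so it is invertible iff it is injective, i.e.\ iff $-1\notin\sigma_p(B_T)$, which is precisely~\eqref{spectE200}. (This is the place where the compact-embedding assumption~\eqref{comp} is genuinely used, mirroring its role in Proposition~\ref{P1} for Theorem~\ref{T0}.)

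\textbf{Main obstacle.} The routine part is the Lipschitz bookkeeping for $\Sigma_T$; the only real subtlety is verifying the mapping properties of $B_T$ and of the solution operators $g\mapsto\int_0^Te^{(t-s)A}g(s)\,\rd s\,\rd t$ on the time-weighted scale, in particular that they send $C_\nu((0,T],E_\gamma)$ boundedly into $E_0$ (and $B_T$ into $E_1$) — this rests on standard analytic-semigroup estimates $\|e^{tA}\|_{\mathcal{L}(E_0,E_\beta)}\le c\,t^{-\beta}$ together with $\nu<1$, and these should already be available from the machinery behind Theorem~\ref{T1} and the analysis of $\Phi_T,\Psi_T$. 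I expect the compactness argument for $1+B_T$ in part~(ii) to be the conceptual crux, since it is what replaces the more delicate Fredholm-type reasoning used for $\Phi_T$ in the proof of Theorem~\ref{T0}; once invertibility is in hand, the rest is an application of Theorem~\ref{T1} (or Corollary~\ref{C11}).
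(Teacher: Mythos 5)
Your proposal is correct and follows essentially the same route as the paper: the same reduction of \eqref{E100} and \eqref{E200} to \eqref{EE} via the explicit formulas for $\Sigma_T$, invertibility of $1-b\,e^{TA}$ via the spectral mapping theorem and of $1+\int_0^Tb(t)\,e^{tA}\,\rd t$ via compactness and the Fredholm alternative, the mapping bounds for the Duhamel-type operators as in Lemma~\ref{L1}, and then an application of Theorem~\ref{T1} (resp.\ Corollary~\ref{C11}). One small inaccuracy: for $b$ merely continuous the operator $\int_0^T b(t)\,e^{tA}\,\rd t$ need not map $E_0$ into $E_1$ (the bound $\|e^{tA}\|_{\mathcal{L}(E_0,E_1)}\lesssim t^{-1}$ is not integrable at $t=0$); its compactness on $E_0$ follows instead from membership in $\mathcal{L}(E_0,E_\vartheta)$ for some $\vartheta\in(0,1)$ combined with \eqref{comp}, which is all the argument requires.
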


\begin{rem}\label{R2x}
The spectral condition~\eqref{spectE200} is satisfied whenever the spectral radius of the operator $\int_0^T b(t)e^{tA}\,\rd t\in \mathcal{K}(E_0)$
 is strictly less than 1. For instance, this is the case if $T>0$ is small enough. For concrete examples not assuming $T>0$ to be small see Corollary~\ref{C1} and Corollary~\ref{C2} in Section~\ref{Sec6}. 
\end{rem}

The outline of this paper is as follows. In Section~\ref{Sec3} we prove Theorem~\ref{T1} by a fixed point argument and thus   establish the well-posedness of the general problem~\eqref{EE}. Subsequently, in Section~\ref{Sec4} we demonstrate how to apply these findings to problem~\eqref{E} by deriving the invertibility of the operator $\Phi_T$  defined in~\eqref{Phi}. This yields then Theorem~\ref{T0}. In Section~\ref{Sec5} we show how to deduce Theorem~\ref{TE100} by applying Theorem~\ref{T1} to  problems~\eqref{E100} and~\eqref{E200}. We then consider more concrete situations. We provide in Section~\ref{Sec6} examples for operators $A$ satisfying the spectral conditions imposed in the previous theorems. In particular, examples include  symmetric uniformly elliptic differential operators of second or fourth order, see Corollary~\ref{C1} respectively Corollary~\ref{C2}. In Section~\ref{Sec7} we give examples for nonlinearities $f$ that obey assumption~\eqref{A}. Finally, in Section~\ref{Sec8} we consider the semilinear version of problem~\eqref{DD} and illustrate how to apply our result in such a setting.

%%%%%%%%%%%%%%%%%%%%%%%%%%%%%%%%%%%%%%%%%%%%%%%%
%%%%%%%%%%%%%%%%%%%%%%%%%%%%%%%%%%%%%%%%%%%%%%%%
\section{Proof of Theorem~\ref{T1} and Corollary~\ref{C11}}\label{Sec3}
%%%%%%%%%%%%%%%%%%%%%%%%%%%%%%%%%%%%%%%%%%%%%%%%
%%%%%%%%%%%%%%%%%%%%%%%%%%%%%%%%%%%%%%%%%%%%%%%%

We focus on the general problem~\eqref{EE}; that is, on
$$
u'=Au+f(u,t)\,,\quad t\in(0,T]\,,\qquad u(0)=\Sigma_T(u)\,.
$$
First, we prove Theorem~\ref{T1} and then derive Corollary~\ref{C11} as a consequence.

\subsection*{Proof of Theorem~\ref{T1}} In order to prove Theorem~\ref{T1},  we assume~\eqref{Gen}, \eqref{A}, and \eqref{A3}.
We introduce the complete metric space
$$
\mathcal{V}_L:=\big\{v\in C_{\theta}\big((0,T],E_\theta\big)\cap C\big([0,T],E_0\big)\,;\, \|v\|_{C_{\theta}((0,T],E_\theta)}+ \|v\|_{C([0,T],E_0)}\le L\big\}
$$
with $L\in (0,1)$ to be determined later and aim at deriving a fixed point of the mapping $\Lambda$ defined~by
\begin{equation*}
\Lambda(u)(t):=e^{tA}\Sigma_T(u)+\int_0^t e^{(t-s)A} f(u,s)\,\rd s\,,\qquad t\in [0,T]\,,\quad u\in\mathcal{V}_L\,.
\end{equation*}
Such a fixed point corresponds to a (mild) solution to~\eqref{EE}. The existence of a fixed point will be guaranteed by Banach's fixed point theorem once we have established that $\Lambda:\mathcal{V}_L\to \mathcal{V}_L$ is a contraction (for $L$ chosen suitably).

We first recall  from~\cite[II.~Lemma~5.1.3]{LQPP} the estimates 
\begin{equation}\label{b0}
\|e^{tA}\|_{\mathcal{L}(E_\vartheta)}+  t^{\alpha-\beta_0}\,\|e^{tA}\|_{\mathcal{L}(E_\beta,E_\alpha)} 
 \le \omega(T)
\,, \qquad 0\le  t\le T\,, 
\end{equation}
 for $\vartheta\in[0,1]$ and $0\le \beta_0\le \beta\leq \alpha\le 1$ 
 with $\beta_0<\beta$ if $0<\beta<\alpha<1$,  where $\omega(T)>1$ depends also on these parameters. We put $\gamma_0:=0$ if $\gamma=0$ and otherwise chose $\gamma_0\in \big(0,\min\lbrace \gamma,\theta \rbrace\big)$ arbitrarily.

Let $u\in \mathcal{V}_L$ and $0<t\le T$. Since~\eqref{A2}  and \eqref{A3} imply
\begin{equation}\label{AAA2}
\|f(u,t)\|_\gamma\le c_T\,L^{\ell+1}\, t^{-\nu}% + \|f(0,t)\|_\gamma
\end{equation}
and
\begin{equation}\label{AAA3}
\|\Sigma_T(u)\|_0\le c_T\,L^{\ell+1}+\|\Sigma_T(0)\|_0\,,
\end{equation}
respectively, we readily obtain from $\Sigma_T(u)\in E_0$ that $\Lambda(u)\in C\big([0,T],E_0\big)$. Moreover, using~\eqref{b0}, $\nu<1$, and denoting by $c_\gamma$ the norm of the embedding $E_\gamma\hookrightarrow E_0$ we deduce
\begin{align} 
\|\Lambda(u)(t)\|_0&\le \|e^{tA}\|_{\mathcal{L}(E_0)}\,\|\Sigma_T(u)\|_0+c_\gamma\int_0^t \|e^{(t-s)A}\|_{\mathcal{L}(E_0)}\, \|f(u,s)\|_\gamma\,\rd s\nonumber\\
&\le \omega(T)\,\big\{ c_T\, L^{\ell+1} +\|\Sigma_T(0)\|_0\big\} +c_\gamma\,\omega(T) c_T\,L^{\ell+1}\,T^{1-\nu}\,.\label{e1}%\\
%&\qquad + \omega(T)\|f(0,\cdot)\|_{L_1((0,t),E_\gamma)}\,.
\end{align}
Since, due to~\eqref{b0},
\begin{align*}
t^\theta\| e^{tA}\Sigma_T(u)\|_\theta &\le t^\theta\| e^{tA}\|_{\mathcal{L}(E_0,E_\theta)}\,\|\Sigma_T(u)-u_0\|_0+t^\theta \| e^{tA}\|_{\mathcal{L}(E_\theta)}\,\|u_0\|_\theta\\
&\le \omega(T)\big( \|\Sigma_T(u)-u_0\|_0+t^\theta\|u_0\|_\theta\big)
\end{align*}
for $t\in (0,T]$ and each $u_0\in E_\theta$, we may use the density of $E_\theta$ in $E_0$ and $\theta\in (0,1]$ to derive that
\begin{align}\label{e1A}
\big[t\mapsto e^{tA}\Sigma_T(u)\big]\in C_{\theta}\big((0,T],E_\theta\big)
\end{align}
with
$$
\|e^{\cdot A}\Sigma_T(u)\|_{C_{\theta}((0,T],E_\theta)}\le \omega(T)\,\|\Sigma_T(u)\|_0\le  \omega(T) \big\{c_T\,L^{\ell+1}+\,\|\Sigma_T(0)\|_0\big\}\,,
$$
where we used~\eqref{AAA3} for the second inequality.
Recall that $\theta<1+\gamma_0$ and $\nu<1$ in view of~\eqref{A1}. Hence, using~\eqref{b0} and~\eqref{AAA2}, we estimate
\begin{align*} 
\|\Lambda(u)(t)-e^{tA}\Sigma_T(u)\|_{\theta}&\le \int_0^t \|e^{(t-s)A}\|_{\mathcal{L}(E_\gamma,E_\theta)}\,\|f(u,s)\|_\gamma\,\rd s\\
&\le  \omega(T)\,c_T\,L^{\ell+1}\int_0^t (t-s)^{\gamma_0-\theta}\, s^{-\nu}\,\rd s\\
%&\qquad + \omega(T)\int_0^t (t-s)^{\gamma_0-\theta}\, s^{-\nu} \,\rd s\,\|f(0,\cdot)\|_{C_{\nu}((0,t],E_\gamma)}\\
& =  \omega(T)\, c_T\,L^{\ell+1} \, t^{1+\gamma_0-\theta-\nu} \,\mathsf{B}(1+\gamma_0-\theta,1-\nu)
\end{align*}
with $\mathsf{B}$ denoting the Beta function. Consequently, since $1+\gamma_0-\nu>0$ and using~\eqref{e1A}, we derive that $\Lambda(u)\in C_{\theta}\big((0,T],E_\theta\big)$ with 
\begin{align}
\|\Lambda(u)\|_{C_{\theta}((0,T],E_\theta)}&\le \omega(T)\big\{c_T\, L^{\ell+1}+\,\|\Sigma_T(0)\|_0\big\}\nonumber\\
&\quad +  \omega(T) c_T\,L^{\ell+1} \, T^{1+\gamma_0-\nu} \,\mathsf{B}(1+\gamma_0-\theta,1-\nu)\,.\label{e2}
\end{align}
Likewise, for  $u,v\in \mathcal{V}_L$, $0<t\le T$, we obtain from~\eqref{A} and~\eqref{b0}
\begin{align*} 
\|\Lambda(u)(t)-\Lambda(v)(t)\|_{\theta}&\le \|e^{tA}\|_{\mathcal{L}(E_0,E_\theta)}\,\| \Sigma_T(u)-\Sigma_T(v)\|_0
\\
&\quad+ \int_0^t \|e^{(t-s)A}\|_{\mathcal{L}(E_\gamma,E_\theta)}\, 
 \|f(u,s)-f(v,s)\|_\gamma\,\rd s\\
&\le \omega(T) t^{-\theta}\, c_T\,L^{\ell}\, \|u-v\|_{C_{\theta}((0,T],E_\theta)}
\\
&\quad + \omega(T)\, c_T\,L^{\ell}  \int_0^t (t-s)^{\gamma_0-\theta}\, s^{-\nu}\,\rd s\, \|u-v\|_{C_{\theta}((0,T],E_\theta)}
\end{align*}
and therefore
%\begin{align} 
%\|\Lambda(u)(t)-\Lambda(v)(t)\|_{0}&\le \omega(T) \, c_T(L)\,L^{\ell}\, \|u-v\|_{C_{\theta}((0,T],E_\theta)}
%\nonumber \\
%&\quad + \omega(T)\, c_T(L)\,L^{\ell}\, T^{1+\gamma_0-\nu} \,\mathsf{B}(1+\gamma_0,1-\nu)\, %\|u-v\|_{C_{\theta}((0,T],E_\theta)}\label{e3}
%\end{align}
%and
\begin{align} 
\|\Lambda(u)&-\Lambda(v)\|_{C_\theta((0,T],E_\theta)}\nonumber \\
&\le   \omega(T)\, c_T\, L^{\ell}\,\big[1+T^{1+\gamma_0-\nu} \,\mathsf{B}(1+\gamma_0-\theta,1-\nu)\big]\, \|u-v\|_{C_{\theta}((0,T],E_\theta)}\,.\label{e3a}
\end{align}
A similar  but simpler estimate based on \eqref{A} and~\eqref{b0} also yields
\begin{align} 
\|\Lambda(u)-\Lambda(v)\|_{C([0,T],E_0)} \le   \omega(T)\, c_T\, L^{\ell}\,\Big[1+\frac{c_\gamma\,T^{1-\nu}}{1-\nu}\Big]\, \|u-v\|_{C_{\theta}((0,T],E_\theta)}\,.\label{e3}
\end{align}

Since $\ell>0$, we deduce from~\eqref{e1}-\eqref{e3} that there is $L=L(T)\in(0,1)$ such that $\Lambda:\mathcal{V}_L\to \mathcal{V}_L$ is a contraction whenever $\Vert\Sigma_T(0)\Vert_0\le m(T):=L/(4\omega(T))$. Hence, Banach's fixed point theorem ensures the existence of a unique $u\in \mathcal{V}_L$ with $\Lambda(u)=u$. Clearly, $u$ is a mild solution to~\eqref{EE}. 

 Finally, we show that $u$ is a strong solution if $\gamma>0$. For each $\ve\in (0,T)$, the function  $u_\ve:=u(\cdot+\ve)\in C([0,T-\ve],E_\theta)$ is a mild solution to the linear Cauchy problem
$$
u_\ve'=Au_\ve +f_\ve(t)\,,\quad t\in [0,T-\ve]\,,\qquad u_\ve(0)=u(\ve)\in E_\theta\,,
$$
with $f_\ve:=f(u,\cdot+\ve)\in C([0,T-\ve],E_\gamma)$. Since $\gamma>0$, \cite[II.Theorem~1.2.2]{LQPP} implies that
$$
u_\ve\in C^1\big((0,T-\ve],E_0\big)\cap C\big((0,T-\ve], E_1\big)
$$
is a strong solution. Since $\ve> 0$ was arbitrary, we deduce that $u$ is a strong solution with the stated regularity, hence the proof of Theorem~\ref{T1} is complete.\qed

\subsection*{Proof of Corollary~\ref{C11}} Let now assumption~\eqref{Gen},~\eqref{A3}, and \eqref{aaa} be satisfied. Setting
$$
f(v)(t):= f(v(t))\,,\qquad t\in (0,T]\,,\quad v\in C_{\theta}\big((0,T],E_\theta\big)\,,
$$
and noticing that
$$
t^\theta\|v(t)\|_\theta\le \|v\|_{C_{\theta}((0,t],E_\theta)}\,,\quad t\in (0,T]\,,\qquad \lim_{t\to 0^+}\|v\|_{C_{\theta}((0,t],E_\theta)}= 0\,,
$$
for $v\in C_{\theta}\big((0,T],E_\theta\big)$, it readily follows from~\eqref{aaa} that
$$
f:C_{\theta}\big((0,T],E_\theta\big)\to C_{\nu}\big((0,T],E_0\big)
$$ 
 satisfies~\eqref{A2} with $\nu=\theta(\ell+1)\in [0,1)$ and $\gamma=0$. Consequently, Theorem~\ref{T1} ensures the existence of a unique mild solution $u\in C_{\theta}\big((0,T],E_\theta\big)\cap C\big([0,T],E_0\big)$ to~\eqref{EE}.

It remains to show that it is also a strong solution. Since $\theta<1$ by assumption~\eqref{aaa}, we can choose $\xi\in (\theta,1)$. As above, for each $\ve\in (0,T)$, the function  $u_\ve:=u(\cdot+\ve)\in C([0,T-\ve],E_\theta)$ is a mild solution to the linear Cauchy problem
$$
u_\ve'=Au_\ve +f_\ve(t)\,,\quad t\in [0,T-\ve]\,,\qquad u_\ve(0)=u(\ve)\in E_\theta\,,
$$
with $f_\ve:=f(u_\ve)\in C([0,T-\ve],E_0)$. In fact, \cite[II.Theorem~5.4.1]{LQPP} ensures that we may assume without loss of generality that  $u_\ve(0)\in E_\xi$ by making $\ve>0$ smaller so that we infer from \cite[II.Theorem~5.3.1]{LQPP} that $u_\ve\in C^{\xi-\theta}\big([0,T-\ve],E_\theta)$. Therefore, $f_\ve\in C^{\xi-\theta}\big([0,T-\ve],E_0)$ according to~\eqref{aaa} and we may thus apply \cite[II.Theorem~1.2.1]{LQPP} to conclude that $u_\ve$ is a strong solution to the linear Cauchy problem. Since $\ve> 0$ was arbitrary, also $u$ is a strong solution, and we deduce Corollary~\ref{C11}.\qed

%%%%%%%%%%%%%%%%%%%%%%%%%%%%%%%%%%%%%%%%%%%%%%%%
%%%%%%%%%%%%%%%%%%%%%%%%%%%%%%%%%%%%%%%%%%%%%%%%
\section{Proof of Theorem~\ref{T0}}\label{Sec4}
%%%%%%%%%%%%%%%%%%%%%%%%%%%%%%%%%%%%%%%%%%%%%%%%
%%%%%%%%%%%%%%%%%%%%%%%%%%%%%%%%%%%%%%%%%%%%%%%%
We shall apply Theorem~\ref{T1} to problem~\eqref{E} and thus prove Theorem~\ref{T0}. We have already shown that problem~\eqref{E} is for a given $M\in E_1$ (formally) equivalent to 
$$
u'=Au+f(u,t)\,,\quad t\in(0,T]\,,\qquad u(0)=\Sigma_T(u)\,,
$$
where (see~\eqref{u0})
\begin{equation*}
\Sigma_T(u)=\Phi_T^{-1}\big(M-\Psi_T(f(u))\big)
\end{equation*}
with operators  $\Phi_T$ and $\Psi_T$ given by (see~\eqref{Phi} and \eqref{Psi})
\begin{equation*}
\Phi_T=a\,e^{TA}+\int_0^Tb(t)\,e^{tA}\,\rd t
\end{equation*}
respectively
\begin{equation*}
\Psi_T g=a\int_0^T e^{(T-s)A} g(s)\,\rd s+ \int_0^Tb(t)\int_0^t e^{(t-s)A} g(s)\,\rd s\,\rd t\,.
\end{equation*}
In order  to apply Theorem~\ref{T1} we have to make this more rigorous. We first note that $\Phi_T$ is an isomorphism:

\begin{prop}\label{P1}
Suppose \eqref{Gen} and \eqref{comp}. Moreover, let $a\in \R$, $b\in C^1([0,T],\R)$ with $b(0)\not= 0$ and assume~\eqref{spect}. 
Then
$\Phi_T\in \mathcal{L}is(E_0,E_1)$.
\end{prop}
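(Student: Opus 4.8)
The plan is to show that $\Phi_T=a\,e^{TA}+\int_0^T b(t)\,e^{tA}\,\rd t$ is a bounded bijection from $E_0$ onto $E_1$, since then the open mapping theorem (together with the continuous embedding $E_1\hookrightarrow E_0$) gives that it is an isomorphism. Boundedness of $\Phi_T$ as a map $E_0\to E_1$ is essentially already contained in~\eqref{b0}: each $e^{tA}$ maps $E_0$ into $E_1$ for $t>0$ with $\|e^{tA}\|_{\mathcal{L}(E_0,E_1)}\le\omega(T)\,t^{-1}$, but this is not integrable at $t=0$, so the naive bound fails for the integral term. The fix is to integrate by parts: using $b\in C^1$ and $\frac{\rd}{\rd t}e^{tA}z=Ae^{tA}z$ one writes $\int_0^T b(t)\,Ae^{tA}z\,\rd t = b(T)e^{TA}z - b(0)z - \int_0^T b'(t)e^{tA}z\,\rd t$, which shows $A\Phi_T z = -b(0)z + (\text{something bounded into }E_0)$, hence $\Phi_T z\in E_1$ with $\|\Phi_T z\|_1 \le C\|z\|_0$ by the closed graph theorem (or by the equivalence of $\|\cdot\|_1$ and $\|z\mapsto\|z\|_0+\|Az\|_0\|$, as $A\in\mathcal{H}(E_1,E_0)$ can be taken invertible after a shift). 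So $\Phi_T\in\mathcal{L}(E_0,E_1)$.

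The heart of the matter is bijectivity. Injectivity is exactly the hypothesis~\eqref{spect}: $\mathrm{ker}(\Phi_T)=\{0\}$. For surjectivity the plan is a Fredholm argument. From the identity above, $A\Phi_T = -b(0)\,\mathrm{id}_{E_0} + R$, where $R z := b(T)e^{TA}z - \int_0^T b'(t)e^{tA}z\,\rd t$. Now I claim $R\in\mathcal{K}(E_0)$: the term $b(T)e^{TA}$ factors through the compact embedding $E_1\hookrightarrow E_0$ (using~\eqref{comp}), and the integral $\int_0^T b'(t)e^{tA}z\,\rd t$ is a norm-limit (uniformly in $\|z\|_0\le1$) of Riemann sums, each of which is a finite combination of operators $e^{tA}$ with $t>0$ that are compact on $E_0$ by~\eqref{comp}; hence the integral is compact and so is $R$. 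Since $b(0)\neq0$, $A\Phi_T = -b(0)(\mathrm{id}_{E_0} - b(0)^{-1}R)$ is a compact perturbation of a multiple of the identity, hence Fredholm of index zero on $E_0$. Because $A:E_1\to E_0$ is an isomorphism (after a harmless shift of $A$ by a scalar, which does not affect the analyticity of the semigroup nor, with care, the relevant spectral hypotheses — or one argues directly via $\mathcal{H}(E_1,E_0)$ and the fact that $\lambda-A$ is invertible for large $\lambda$), $\Phi_T:E_0\to E_1$ is itself Fredholm of index zero: $\mathrm{ind}(\Phi_T)=\mathrm{ind}(A^{-1})+\mathrm{ind}(A\Phi_T)=0$. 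Combined with injectivity from~\eqref{spect}, index zero forces surjectivity, so $\Phi_T$ is bijective, and the open mapping theorem finishes the proof.

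The main obstacle I anticipate is making the compactness of $R$ — and in particular of the operator-valued integral $\int_0^T b'(t)e^{tA}\,\rd t$ — fully rigorous, since the family $\{e^{tA}:t\in[0,T]\}$ is only strongly continuous down to $t=0$ and $e^{0A}=\mathrm{id}$ is not compact, so one cannot simply say ``an integral of compact operators is compact''. The clean way is: for $\delta>0$, $\int_\delta^T b'(t)e^{tA}\,\rd t$ is a uniform limit of Riemann sums of compact operators, hence compact, and $\|\int_0^\delta b'(t)e^{tA}\,\rd t\|_{\mathcal{L}(E_0)}\le \|b'\|_\infty\,\omega(T)\,\delta\to0$; so $\int_0^T b'(t)e^{tA}\,\rd t$ is a norm-limit of compact operators, hence compact. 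A secondary, more bookkeeping-type obstacle is the interplay with $\mathrm{ind}(A)$: one should either assume (without loss of generality, possibly citing earlier conventions) that $0\in\rho(A)$ so $A\in\mathcal{L}is(E_1,E_0)$, or absorb a shift $A\rightsquigarrow A-\lambda_0$ and check that the resulting modification of $\Phi_T$ is still a compact perturbation of $-b(0)\,\mathrm{id}$ plus an isomorphism — the structure of the argument is unchanged but the algebra needs a line or two of care, and the role of~\eqref{spect} must be kept pinned to injectivity of the original $\Phi_T$.
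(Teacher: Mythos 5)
Your proof follows essentially the same route as the paper: integration by parts to write $A\Phi_T=-b(0)+(\text{compact})$, Riesz--Schauder to get a Fredholm operator of index zero, injectivity from~\eqref{spect}, and then bijectivity plus the open mapping theorem. Two loose ends are worth tightening. First, your formula for $R$ drops the term $aAe^{TA}$ when $a\neq 0$; this is harmless, since $Ae^{TA}=\bigl(Ae^{(T/2)A}\bigr)e^{(T/2)A}$ factors through the compact embedding $E_1\hookrightarrow E_0$, but it must appear in $R$. Second, the shift issue is resolved more cleanly than by ``WLOG $0\in\rho(A)$'' (replacing $A$ by $A-\lambda_0$ changes the semigroup, hence $\Phi_T$ and the hypothesis~\eqref{spect}): the paper fixes $\mu\in\rho(A)$ and considers $(A-\mu)\Phi_T=A\Phi_T-\mu\Phi_T$, observing that the extra term $-\mu\Phi_T$ is itself compact on $E_0$ because $\Phi_T\in\mathcal{L}(E_0,E_1)$ and \eqref{comp} holds, so the compact-perturbation structure is untouched and $\Phi_T=(A-\mu)^{-1}(A-\mu)\Phi_T$. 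Finally, your Riemann-sum argument for compactness of the integral term is valid but unnecessary: by~\eqref{b0} the operator maps $E_0$ boundedly into $E_\vartheta$ for any $\vartheta\in(0,1)$ (the singularity $t^{-\vartheta}$ being integrable), and $E_\vartheta$ embeds compactly into $E_0$.
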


\begin{proof}
Since integration by parts yields
\begin{align*}
A\Phi_T x&=aAe^{TA}x+\int_0^T b(t)\,A\,e^{tA}x\,\rd t=\big(aA+b(T)\big)e^{TA}x-\int_0^Tb'(t)\,e^{tA}x\,\rd t-b(0)x
\end{align*}
for $x\in E_0$,  we deduce $A\Phi_T\in \mathcal{L}(E_0)$ and thus $\Phi_T\in \mathcal{L}(E_0,E_1)$.
Fix $\mu\in\rho(A)$ in the resolvent set. Then, it follows from~\eqref{b0} that
$$
K_\mu:=\left(\big(aA+b(T)\big)e^{TA}-\int_0^Tb'(t)\,e^{tA}\,\rd t-\mu\Phi_T\right)\in  \mathcal{L}(E_0,E_\vartheta)\subset\mathcal{K}(E_0)
$$
for any $ \vartheta\in (0,1)$, where we use that $E_\vartheta$ embeds compactly into $E_0$ by assumption~\eqref{comp} and e.g.~\cite[I.Theorem~2.1.1]{LQPP}. Since $b(0)\not=0$, the Riesz-Schauder theorem now ensures that 
$$
(A-\mu)\Phi_T=K_\mu-b(0) \in\mathcal{L}(E_0)
$$ 
is a Fredholm operator of index zero. In fact, condition~\eqref{spect} implies that $\mathrm{ker}\big((A-\mu)\Phi_T\big)=\{0\}$, and hence $(A-\mu)\Phi_T$ is an isomorphism on $E_0$. Consequently 
$$
\Phi_T=(A-\mu)^{-1}(A-\mu)\Phi_T\in \mathcal{L}is(E_0,E_1)
$$
as claimed.
\end{proof}

For $\Psi_T$  we note:

\begin{lem}\label{L1}
Assume~\eqref{Gen} and let $a\in \R$, $b\in C([0,T],\R)$, $\gamma\in (0,1]$, and $\nu\in [0,1)$. Then $\Psi_T\in\mathcal{L}\big(C_\nu((0,T],E_\gamma), E_1\big)$.% and there is $c(T,\gamma,\nu)>0$ with
%\begin{align*}
%\|\Psi_T g\|_1&\le c(T,\gamma,\nu)\ (a+\|b\|_\infty T)\, \|g\|_{C_\nu((0,T],E_\gamma)}
%\end{align*}
%for $g\in C_\nu\big((0,T],E_\gamma\big)$.
\end{lem}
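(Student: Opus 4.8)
The plan is to show that $\Psi_T$ maps $C_\nu((0,T],E_\gamma)$ boundedly into $E_1$ by estimating $A\Psi_T g$ together with $\Psi_T g$ in $E_0$. Write $\Psi_T g = a\,\Psi_T^{(1)} g + \Psi_T^{(2)} g$ where $\Psi_T^{(1)} g := \int_0^T e^{(T-s)A} g(s)\,\rd s$ and $\Psi_T^{(2)} g := \int_0^T b(t)\int_0^t e^{(t-s)A} g(s)\,\rd s\,\rd t$, and treat each summand separately; linearity and continuity of $g\mapsto\Psi_T g$ into $E_0$ are immediate from the smoothing estimate~\eqref{b0} since $\gamma>0$ and $\nu<1$ make all the arising $s$-integrals convergent (the relevant exponents are $-\nu$ near $s=0$, which is integrable). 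So the whole point is to control the $E_1$-norm, for which it suffices by~\eqref{Gen} to bound $A\Psi_T g$ in $E_0$.

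The key step is the identity, valid for $g\in C_\nu((0,T],E_\gamma)$ and $0<s<t$,
\begin{equation*}
A\,e^{(t-s)A} g(s) = -\frac{\rd}{\rd t}\, e^{(t-s)A} g(s)\cdot(-1)\,,
\end{equation*}
more usefully phrased as $A e^{(t-s)A} g(s) = -\p_s\big(e^{(t-s)A} g(s)\big) + e^{(t-s)A} g'(s)$ — but since we do not want to assume $g$ differentiable, I would instead move the derivative onto the other variable: $A e^{(t-s)A} = \p_t e^{(t-s)A}$. For $\Psi_T^{(1)}$ this does not directly help because the upper limit $T$ is fixed, so here one uses a different device. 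The cleanest route for $\Psi_T^{(1)} g$ is to split $\int_0^T e^{(T-s)A} g(s)\,\rd s = \int_0^{T/2} + \int_{T/2}^T$; on $[0,T/2]$ the operator $e^{(T-s)A}$ already maps into $E_1$ with norm bounded by $\omega(T)(T-s)^{\gamma-1}\le \omega(T)(T/2)^{\gamma-1}$ via~\eqref{b0} (using $0\le \gamma\le 1$ with $\gamma>0$), so $A e^{(T-s)A} g(s)$ is integrable there; on $[T/2,T]$ one writes $e^{(T-s)A} g(s) = e^{(T/2)A}\, e^{(T/2-s)A}g(s)$ and notes $A e^{(T/2)A}\in\mathcal{L}(E_0)$ while $\int_{T/2}^T e^{(T/2-s)A}g(s)\,\rd s$ is an $E_0$-convergent integral because $\|e^{(T/2-s)A}g(s)\|_0 \le \omega(T) s^{-\nu}\|g\|_{C_\nu}$. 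Hence $A\Psi_T^{(1)} g\in E_0$ with norm bounded by $c_T\|g\|_{C_\nu((0,T],E_\gamma)}$.

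For $\Psi_T^{(2)} g$ proceed analogously, integrating the double integral: for each fixed $t\in(0,T]$ the inner integral $\int_0^t e^{(t-s)A} g(s)\,\rd s$ lies in $E_1$ by the same splitting-at-$t/2$ argument (with bound $\sim t^{\gamma-\nu}$ after computing the Beta-function integral $\int_0^t (t-s)^{\gamma-1} s^{-\nu}\,\rd s$), and then $A$ applied to it has $E_0$-norm integrable in $t$ against the bounded weight $b(t)$; using $\gamma-\nu > -1$ (which holds since $\gamma>0,\nu<1$) the outer $t$-integral converges and yields $\|A\Psi_T^{(2)} g\|_0 \le c_T\|b\|_{C([0,T])}\|g\|_{C_\nu((0,T],E_\gamma)}$. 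Combining the two pieces with the $E_0$-bound already noted gives $\|\Psi_T g\|_1 \le C(A,T,a,b,\gamma,\nu)\,\|g\|_{C_\nu((0,T],E_\gamma)}$, and linearity of $\Psi_T$ is obvious, so $\Psi_T\in\mathcal{L}(C_\nu((0,T],E_\gamma),E_1)$. The main obstacle is purely technical: the fixed upper endpoint $T$ in $\Psi_T^{(1)}$ prevents the naive ``move $A$ through $e^{(t-s)A}$ and use $(t-s)^{\gamma-1}$ near $s=t$'' estimate, which would fail to be integrable when $\gamma<1$; the split-at-half-time trick (or, equivalently, the semigroup decomposition $e^{(T-s)A}=e^{(T-s)A/2}e^{(T-s)A/2}$ applied to put one factor's smoothing into $E_\gamma\to E_1$ while the other handles the $s\to0$ singularity) is what resolves it, and one must keep track that $\gamma>0$ is genuinely used here — this is exactly why the hypothesis $\gamma\in(0,1]$ appears in the lemma.
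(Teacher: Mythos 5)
Your overall strategy (bound $\Psi_T g$ in $E_1$ via the smoothing estimate~\eqref{b0}) is the right one, but the proof as written is built around a non-problem and the workaround it substitutes is incorrect. The claimed obstacle is not there: $(T-s)^{\gamma-1}$ \emph{is} integrable near $s=T$ precisely because $\gamma>0$ (the exponent $\gamma-1$ exceeds $-1$; the function blows up but remains integrable), so the ``naive'' estimate $\|e^{(T-s)A}g(s)\|_1\le\omega(T)(T-s)^{\gamma-1}s^{-\nu}\|g\|_{C_\nu((0,T],E_\gamma)}$ already yields $\int_0^T(T-s)^{\gamma-1}s^{-\nu}\,\rd s=T^{\gamma-\nu}\mathsf{B}(\gamma,1-\nu)<\infty$, using $\nu<1$ at the other endpoint. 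This one-line Beta-function computation is exactly the paper's proof (for both summands, plus one further $t$-integration for the double integral). It is also what you yourself invoke for the inner integral of $\Psi_T^{(2)}$, so your claim that the fixed upper endpoint $T$ breaks the estimate for $\Psi_T^{(1)}$ is inconsistent with your own treatment of $\Psi_T^{(2)}$, where the inner upper endpoint $t$ plays the identical role.

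The substitute argument fails where it matters. On $[T/2,T]$ you factor $e^{(T-s)A}=e^{(T/2)A}e^{(T/2-s)A}$, but for $s\in[T/2,T]$ the exponent $T/2-s$ is negative, so $e^{(T/2-s)A}$ is undefined; that factorization is valid only for $s\le T/2$, i.e.\ on the other half of the interval, where it is not needed (there $(T-s)^{\gamma-1}\le(T/2)^{\gamma-1}$ is already bounded, as you note). Near $s=T$, where the $\mathcal{L}(E_\gamma,E_1)$-norm of $e^{(T-s)A}$ genuinely degenerates, your argument therefore provides no bound at all; the only available control is the direct $(T-s)^{\gamma-1}$ estimate, whose integrability is precisely what the hypothesis $\gamma>0$ buys. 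Repairing the proof amounts to deleting the splitting and writing down the direct estimate.
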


\begin{proof}
It readily follows from the definition of $\Psi_T$, see~\eqref{Psi}, and~\eqref{b0}  that
\begin{align}
\|\Psi_Tg\|_1&\le \vert a\vert\int_0^T\|e^{(T-s)A}\|_{\mathcal{L}(E_\gamma,E_1)}\, \|g(s)\|_{\gamma}\,\rd s +\int_0^T\vert b(t)\vert\int_0^t\|e^{(t-s)A}\|_{\mathcal{L}(E_\gamma,E_1)}\, \|g(s)\|_{\gamma}\,\rd s\,\rd t\nonumber\\
&\le \vert a\vert\omega(T)\int_0^T (T-s)^{\gamma-1}\, s^{-\nu} \,\rd s\, \|g\|_{C_\nu((0,T],E_\gamma)}\nonumber\\
&\quad +\|b\|_\infty\omega(T)\int_0^T\int_0^t(t-s)^{\gamma-1}\, s^{-\nu}\,\rd s\,\rd t\, \|g\|_{C_\nu((0,T],E_\gamma)}\nonumber\\
&=\omega(T) T^{\gamma-\nu}\left[\vert a\vert+\frac{\|b\|_\infty T}{1+\gamma-\nu}\right]\,\mathsf{B}(\gamma,1-\nu)\, \|g\|_{C_\nu((0,T],E_\gamma)}\label{h1}
\end{align}
for $g\in C_\nu\big((0,T],E_\gamma\big)$. This yields the claim.
\end{proof}

\subsection*{Proof of Theorem~\ref{T0}}
To finish off the proof of Theorem~\ref{T0} suppose the assumptions stated therein. In particular, $f$ satisfies~\eqref{A} with $\gamma>0$. This together with Proposition~\ref{P1} and Lemma~\ref{L1} imply for any given $M\in E_1$ that $\Sigma_T$, given by~\eqref{u0}, is a mapping
$$
\Sigma_T: C_{\theta}\big((0,T],E_\theta\big)\to E_0
$$
such that there is $c_T>0$ with
\begin{equation*}
\|\Sigma_T(u)-\Sigma_T(v)\|_0\le \|\Phi_T^{-1}\|_{\mathcal{L}(E_1,E_0)}\,\left\| \Psi_T\big(f(u)-f(v)\big)\right\|_{1}\le c_T\,L^{\ell}\, \|u-v\|_{C_{\theta}((0,T],E_\theta)}
\end{equation*}
for $L\in(0,1)$ and $u,v\in C_{\theta}((0,T],E_\theta)$ with 
$$
\|u\|_{C_{\theta}((0,T],E_\theta)}+\|v\|_{C_{\theta}((0,T],E_\theta)}\le L\,,\qquad \sup_{t\in (0,T]}\big(\|u(t)\|_0+\|v(t)\|_0\big)\le L\,.
$$
Therefore, $\Sigma_T$ satisfies~\eqref{A3} and 
$$
\|\Sigma_T(0)\|_0\le \|\Phi_T^{-1}\|_{\mathcal{L}(E_1,E_0)}\,\left\| M\right\|_{1}\,,
$$ 
where we recall that $f(0)=0$.
Theorem~\ref{T0} is now a consequence of Theorem~\ref{T1}.\qed

\begin{rem}\label{R2xx}
Let  $a=0$ and $\gamma>\nu$ in \eqref{A1}. Then, it follows from the proof of Lemma~\ref{L1} (see~\eqref{h1}) that 
$$
\|\Psi_T\|_{\mathcal{L}(C_\nu((0,T],E_\gamma), E_1)}\le c T^{1+\gamma-\nu}\,.
$$ 
Therefore, if 
\begin{equation}\label{pi}
\|\Phi_T^{-1}\|_{\mathcal{L}(E_1,E_0)}\le \frac{c}{T}\,,
\end{equation}
then, for any given $m_0>0$, one can ensure that
$$
\|\Sigma_T(0)\|_0=\|\Phi_T^{-1}(M-\Psi_T(f(0)))\|_0\le \frac{c}{T}\|M\|_1+ c T^{\gamma-\nu}\|f(0)\|_{C_\nu((0,T],E_\gamma)}\le m_0\,,
$$
by taking $T\in (0,T_0)$ and $\|M\|_1$ sufficiently small. This yields the situation of Remark~\ref{R1}. Consequently, provided $a=0$, $\gamma>\nu$, and \eqref{pi} holds,   one can drop the assumption $f(0)=0$ in \eqref{A2} and still show well-posedness of problem~\eqref{E}  as stated in Theorem~\ref{T0} for small values of $T$.

For example, condition~\eqref{pi} can be verified for operators in Hilbert spaces, see Remark~\ref{R3} below.
\end{rem}

%%%%%%%%%%%%%%%%%%%%%%%%%%%%%%%%%%%%%%%%%%%%%%%%
%%%%%%%%%%%%%%%%%%%%%%%%%%%%%%%%%%%%%%%%%%%%%%%%

\section{Proof of Theorem~\ref{TE100}}\label{Sec5}

%%%%%%%%%%%%%%%%%%%%%%%%%%%%%%%%%%%%%%%%%%%%%%%%
%%%%%%%%%%%%%%%%%%%%%%%%%%%%%%%%%%%%%%%%%%%%%%%%

In order to prove Theorem~\ref{TE100} we assume the conditions stated therein and  proceed similarly as in the previous section, distinguishing between problem~\eqref{E100} and problem~\eqref{E200}.

\subsection*{Proof for Problem~\eqref{E100}} Consider a mild solution  $u$ to~\eqref{E100} for a given $M\in E_0$; that is, a mild solution to
$$
u'=Au+f(u,t)\,,\quad t\in(0,T]\,,\qquad u(0)-bu(T)=M\,.
$$
Plugging the formula \eqref{vdk} into $u(0)-bu(T)=M$ yields the condition
$$
u(0)=\big(1-be^{TA}\big)^{-1}\left(M+b\int_0^Te^{(T-s)A}f(u,s)\,\rd s\right)\,.
$$
That is, in this case we have
\begin{equation}\label{sigma1}
\Sigma_T(u):=K_T^{-1}\big(M+Z_T(f(u))\big)
\end{equation}
with
$$
K_T:=1-be^{TA}\,,\qquad Z_Tg:=b\int_0^Te^{(T-s)A}g(s)\,\rd s\,.
$$
Note that  $K_T$ is a zero index Fredholm operator since $be^{TA}\in\mathcal{K}(E_0)$ by~\eqref{Gen} and~\eqref{comp}, hence assumption~\eqref{spectE100x} and the spectral mapping theorem~\cite[IV.Corollary~3.12]{EngelNagel} ensure   $K_T^{-1}\in\mathcal{L}(E_0)$. Moreover, as in Lemma~\ref{L1} it follows from~\eqref{b0} that
$Z_T\in\mathcal{L}\big(C_\nu((0,T],E_\gamma), E_0\big)$ with
\begin{align*}
\|Z_T g\|_0&\le \frac{\omega(T)T^{1-\nu}c_\gamma \vert b\vert}{1-\nu}\, \|g\|_{C_\nu((0,T],E_\gamma)}
\end{align*}
for $\gamma\in [0,1]$ and $\nu\in [0,1)$. As in the previous section this together with $f(0)=0$ implies that $\Sigma_T$ defined in \eqref{sigma1}
satisfies~\eqref{A3} and
$$
\Sigma_T(0)=K_T^{-1}M\,.
$$ 
Theorem~\ref{T1} and Corollary~\ref{C11} now yield Theorem~\ref{TE100} for problem~\eqref{E100}.\qed

%%%%%%%%%%%%%%%%%%%%%%%%%%%%%%%%%%%%%%%%%%%%%%%%
%%%%%%%%%%%%%%%%%%%%%%%%%%%%%%%%%%%%%%%%%%%%%%%%

%\section{Proof of Theorem~\ref{TE200}}

%%%%%%%%%%%%%%%%%%%%%%%%%%%%%%%%%%%%%%%%%%%%%%%%
%%%%%%%%%%%%%%%%%%%%%%%%%%%%%%%%%%%%%%%%%%%%%%%%

\subsection*{Proof for Problem~\eqref{E200}}  If $u$ is a mild solution to~\eqref{E200} for a given $M\in E_0$; that is, to
$$
u'=Au+f(u,t)\,,\quad t\in(0,T]\,,\qquad u(0)+\int_0^T b(t) u(t)\,\rd t=M\,,
$$
we obtain the condition
\begin{equation}\label{sigma2}
u(0)=\Sigma_T(u):=\big(1+\Phi_T^0\big)^{-1}\left(M-\Psi_T^0(f(u))\right)
\end{equation}
with
$$
\Phi_T^0:=\int_0^T b(t)\,e^{tA}\,\rd t\,,\qquad \Psi_T^0g:=\int_0^T b(t)\int_0^t e^{(t-s)A}g(s)\,\rd s\,\rd t\,.
$$
Note that $\Phi_T^0\in\mathcal{K}(E_0)$, hence $1+\Phi_T^0$ is a zero index Fredholm operator and assumption~\eqref{spectE200} then yields that $\big(1+\Phi_T^0\big)^{-1}\in\mathcal{L}(E_0)$. The proof of Lemma~\ref{L1} entails that
$\Psi_T^0\in\mathcal{L}\big(C_\nu((0,T],E_\gamma), E_0\big)$ for $\gamma\in [0,1]$ and $\nu\in [0,1)$.
Thus, $\Sigma_T$ defined in \eqref{sigma2}
satisfies~\eqref{A3} with
$$
\Sigma_T(0)=\big(1+\Phi_T^0\big)^{-1}M\,,
$$ 
so that Theorem~\ref{T1} (and Corollary~\ref{C11}) implies Theorem~\ref{TE100} for problem~\eqref{E200}.\qed

\section{Examples for the Operator $A$}\label{Sec6}

We provide examples for operators $A$ satisfying the assumptions of the previous theorems, in particular, satisfying condition~\eqref{spect} or~\eqref{spectE100x} or~\eqref{spectE200}. We first consider general self-adjoint operators in Hilbert spaces and then focus on symmetric elliptic differential operators in $L_p$-spaces.%\\

%In the following, assume that $a\in \R$ and $b\in C([0,T],\R)$.

\subsection*{The Hilbert Space Case}  In  the Hilbert space case one may use Fourier series so that the spectral conditions take a particular form as has been observed in \cite{Dokuchaev}.
Let
\begin{subequations}\label{As}
\begin{equation}
\begin{split}
&\text{$A\le \alpha <\infty$ be  a closed, densely defined, self-adjoint operator}\\
&\text{on the Hilbert space $\big(E_0,(\cdot\vert\cdot)\big)$ with  compact resolvent},
\end{split}
\end{equation}
where $A\le \alpha $ means that $(Ax\vert x)\le \alpha \|x\|^2$ for $x\in E_0$. Then the spectrum of $A$ consists of countably many real eigenvalues 
\begin{equation}
\alpha\ge\lambda_1\ge \cdots\ge \lambda_j\to-\infty
\end{equation} 
\end{subequations}
(according to multiplicity) and the corresponding normalized eigenvectors $(\phi_j)_{j\ge 1}$ belong to
$$
E_1=\mathrm{dom}(A)=\Big\{x\in E_0\,;\, \sum_j \vert\lambda_j\vert^2\vert(x\vert\phi_j)\vert^2<\infty\Big\}
$$ 
and build an orthonormal basis in $E_0$.  Fourier series expansion yields for the operator $\Phi_T$ introduced in~\eqref{Phi} the representation
$$
\Phi_T x=\sum_j \left(ae^{T\lambda_j}+\int_0^Tb(t)e^{t\lambda_j}\,\rd t\right)\, \big(x\vert \phi_j\big)\,\phi_j\,,\qquad x\in E_0\,,
$$
and similarly for $\Phi_T^0$ from~\eqref{sigma2}
from which we readily derive the kernel and spectral conditions:

\begin{prop}\label{PP1}
Assume~\eqref{As}. Then we have:\vspace{2mm}

{\bf (a)} Let $a\in \R$ and $b\in C([0,T],\R)$. Condition~\eqref{spect} is equivalent to
\begin{equation*}
-a\not= \int_0^Tb(t)\,e^{-\lambda_j(T-t)}\,\rd t\,,\quad j\in\N\,.
\end{equation*}
In particular,  if $b\ge 0$, then \eqref{spect} is satisfied whenever $a\ge 0$ and $b(0)>0$ or $\alpha\le  0$ and $\displaystyle\int_0^T b(t)\,\rd t> -a$.\vspace{2mm}
%\begin{equation}\label{spect2X}
%a\ge 0 \quad \text{ and }\quad b\ge 0 \ \text{ with }\ b(0)>0\,.
%\end{equation}

{\bf (b)}  Let $b\in \R\setminus\{0\}$. Condition~\eqref{spectE100x} is equivalent to
\begin{equation*} 
1\not= b\, e^{\lambda_j T}\,,\quad j\in\N\,.
\end{equation*}
In particular,  \eqref{spectE100x} is satisfied whenever $b\le 1$ and $\alpha<0$.\vspace{2mm}

{\bf (c)} Let $b\in C([0,T],\R)$.  Condition~\eqref{spectE200} is equivalent to
\begin{equation*} 
-1\not= \int_0^Tb(t)\,e^{-\lambda_j t}\,\rd t\,,\quad j\in\N\,.
\end{equation*}
In particular,  \eqref{spectE200} is satisfied whenever $b\ge 0$.
\end{prop}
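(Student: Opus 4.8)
The plan is to exploit that, under \eqref{As}, everything diagonalizes. Since $A$ is self-adjoint with compact resolvent, $(\phi_j)_{j\ge1}$ is an orthonormal basis of $E_0$ with $A\phi_j=\lambda_j\phi_j$; by uniqueness for the Cauchy problem $u'=Au$, $u(0)=\phi_j$, one has $e^{tA}\phi_j=e^{\lambda_jt}\phi_j$, and testing the relevant Bochner integrals against each $\phi_k$ (a bounded functional, hence commuting with the integral) shows that the three operators governing \eqref{spect}, \eqref{spectE100x} and \eqref{spectE200} -- namely $\Phi_T$, $K_T:=1-be^{TA}$ and $1+\Phi_T^0$ with $\Phi_T^0=\int_0^Tb(t)e^{tA}\,\rd t$ -- are each of the form $x\mapsto\sum_j\mu_j\,(x\vert\phi_j)\,\phi_j$ for an explicit bounded real sequence $(\mu_j)_j$; the case of $\Phi_T$ is precisely the series displayed just before the statement. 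For such an operator the kernel is trivial exactly when $\mu_j\neq0$ for every $j$, and, when the operator is compact, its point spectrum equals $\{\mu_j:j\in\N\}$ (together with $0$ if the operator is not injective, which is immaterial here since the value at issue is $-1$); moreover $\sigma(A)=\sigma_p(A)=\{\lambda_j:j\in\N\}$, a closed set because $\lambda_j\to-\infty$.

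Granting this, the three equivalences are bookkeeping. For \textbf{(a)}, $\mu_j=ae^{\lambda_jT}+\int_0^Tb(t)e^{\lambda_jt}\,\rd t$; dividing by $e^{\lambda_jT}>0$, the requirement $\mu_j\neq0$ for all $j$ becomes $-a\neq\int_0^Tb(t)e^{-\lambda_j(T-t)}\,\rd t$. For \textbf{(b)}, $\mu_j=1-be^{\lambda_jT}$, so $\mathrm{ker}(K_T)=\{0\}$ iff $be^{\lambda_jT}\neq1$ for all $j$, which, since $\sigma(A)=\{\lambda_j\}$, is exactly \eqref{spectE100x}. For \textbf{(c)}, the eigenvalues of $\Phi_T^0$ are $\int_0^Tb(t)e^{\lambda_jt}\,\rd t$, so \eqref{spectE200} is equivalent to $\int_0^Tb(t)e^{\lambda_jt}\,\rd t\neq-1$ for all $j$.

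The ``in particular'' assertions then follow from elementary sign estimates of these integrals. In (a), if $b\ge0$ the integrand $b(t)e^{-\lambda_j(T-t)}$ is nonnegative; if additionally $b(0)>0$ then $b>0$ on a neighbourhood of $0$, so the integral is strictly positive and hence $\neq-a$ whenever $a\ge0$; if instead $\alpha\le0$ then $\lambda_j\le0$ forces $e^{-\lambda_j(T-t)}\ge1$ on $[0,T]$, so the integral is at least $\int_0^Tb(t)\,\rd t>-a$. In (b), $\alpha<0$ gives $e^{\lambda_jT}\in(0,1)$, whence $be^{\lambda_jT}<b\le1$ when $b>0$ and $be^{\lambda_jT}\le0<1$ when $b<0$. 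In (c), $b\ge0$ makes $\int_0^Tb(t)e^{\lambda_jt}\,\rd t\ge0>-1$.

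I do not expect a genuine obstacle: the substance is the diagonalization, which the excerpt has essentially already performed for $\Phi_T$. The only points meriting a line of care are the coordinatewise evaluation of the Bochner integrals (so that $K_T$ and $1+\Phi_T^0$ are manifestly diagonal) and the standard spectral facts $\sigma(A)=\sigma_p(A)=\{\lambda_j\}$ and ``point spectrum of a diagonal compact operator $=$ set of its diagonal entries'', both immediate from self-adjointness together with compactness of the resolvent.
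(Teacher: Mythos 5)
Your proof is correct and is exactly the argument the paper intends: the paper gives no proof beyond displaying the Fourier-series diagonalization of $\Phi_T$ in the eigenbasis $(\phi_j)$ and asserting that the kernel and spectral conditions follow ``readily'', which is precisely the bookkeeping you carry out (including the needed observations that $\sigma(A)=\sigma_p(A)=\{\lambda_j\}$ and that a diagonal operator has trivial kernel, respectively point spectrum, determined by its diagonal entries). One small remark: your computation in (c) yields the eigenvalues $\int_0^T b(t)\,e^{\lambda_j t}\,\rd t$ (with $+\lambda_j$ in the exponent), so the factor $e^{-\lambda_j t}$ in the paper's displayed equivalent condition appears to be a sign typo; this is immaterial for the ``in particular'' assertion, since $b\ge 0$ makes the integral nonnegative, hence different from $-1$, with either sign.
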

 
Since the inverse of $\Phi_T$ can be computed explicitly in terms of Fourier series, one can estimate its norm in terms of $T>0$ and verify \eqref{pi}.

\begin{rem}\label{R3}
Assuming~\eqref{As}, $a=0$, and $b\equiv 1$, we have
$$
\Phi_T^{-1}x=\sum_j \mu_j(T) (x\vert\phi_j)\phi_j\,,\quad x\in E_1=\mathrm{dom}(A)\,,
$$
with
$$
\mu_j(T):=\left\{\begin{array}{ll}
\dfrac{\lambda_j}{e^{T\lambda_j}-1}\,, &\lambda_j\not= 0 \,,\\[3mm]
T^{-1}\,, &\lambda_j=0\,,
\end{array}\right.
$$
and therefore,
$$
 \|\Phi_T^{-1}\|_{\mathcal{L}(E_1,E_0)}\sim T^{-1}\,,\quad T\to 0\,,
$$
that is, \eqref{pi} is satisfied.
\end{rem}

\subsection*{Elliptic Differential Operators} To consider more specifically elliptic differential operators, let $\Omega\subset \R^n$ be  a bounded smooth domain with outer unit normal $\nu$. Consider functions
\begin{subequations}\label{A11}
\begin{equation}\label{g1}
c\in C(\bar\Omega,\R^+)\,,\qquad d\in C^1\big(\bar\Omega,\R_{sym}^{n\times n}\big)\,,\qquad d(x)\zeta\cdot\zeta\ge \underline{d}\, \vert\zeta\vert^2\,,\quad (\zeta,x)\in\R^n\times \bar\Omega\,,
\end{equation}
for some constant $\underline{d}>0$. Given $\delta\in\{0,1\}$ define the boundary operator
\begin{equation}
\mathcal{B}u:=(1-\delta)u+\delta \partial_\nu u
\end{equation}
(i.e. for $\delta=0$ and $\delta=1$ we consider Dirichlet respectively Neumann boundary conditions) and set
\begin{equation}
 W_{p,\mathcal{B}}^2(\Omega):=\big\{u\in W_{p}^2(\Omega)\,;\, \mathcal{B}u=0\ \text{on}\ \partial\Omega\big\}
\end{equation}
for $p\in (1,\infty)$. We then consider the uniformly elliptic second-order differential operator $A_p$ on~$L_p(\Omega)$ given by
\begin{equation}\label{Ap}
A_pu:=\mathrm{div}\big(d(x)\nabla u\big)-c(x) u\,,\quad u\in W_{p,\mathcal{B}}^2(\Omega)\,.
\end{equation}
\end{subequations}
Note that $E_1:=W_{p,\mathcal{B}}^2(\Omega)$ embeds compactly into $E_0:=L_p(\Omega)$, i.e. \eqref{comp} is satisfied.
We then obtain:

\begin{cor}\label{C1}
Assume~\eqref{A11}. For the operator $A_p\in \mathcal{H}\big(W_{p,\mathcal{B}}^2(\Omega),L_p(\Omega)\big)$ defined in~\eqref{Ap} with $p\in (1,\infty)$ we have:\vspace{2mm}

{\bf (a)}   Condition~\eqref{spect} is satisfied if~ $b\in C([0,T],\R^+)$ with $\displaystyle\int_0^T b(t)\,\rd t> -a$ and $p\in [2,\infty)$.\vspace{2mm}

{\bf (b)} Condition~\eqref{spectE100x} is satisfied if $b\le 1$ and if $\underline{c}:=\min_{\bar\Omega} c>0$ when $\delta=1$.\vspace{2mm}

{\bf (c)} Condition~\eqref{spectE200} is satisfied if $b\in C([0,T],\R^+)$.

\end{cor}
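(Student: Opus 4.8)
The plan is to reduce each of the three statements in Corollary~\ref{C1} to the corresponding abstract spectral condition by exploiting the self-adjoint structure of the $L_2$-realization together with the spectral independence of $L_p$-realizations of elliptic operators. First I would recall that $A_2$ (the realization of $A_p$ on $L_2(\Omega)$) is self-adjoint with compact resolvent, and that it satisfies $(A_2u\mid u)=-\int_\Omega d(x)\nabla u\cdot\nabla u\,\rd x-\int_\Omega c(x)u^2\,\rd x\le 0$; in the Dirichlet case $\delta=0$ one in fact has $A_2\le -c_\Omega<0$ by the Poincar\'e inequality, while in the Neumann case $\delta=1$ the same negativity holds provided $\underline{c}>0$. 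Thus assumption~\eqref{As} is met with $\alpha\le 0$ (and with $\alpha<0$ under the stated extra hypotheses), so Proposition~\ref{PP1} applies verbatim to $A=A_2$.

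The second ingredient is the classical fact that the spectrum of the $L_p$-realization of a uniformly elliptic operator with smooth coefficients on a smooth bounded domain is independent of $p\in(1,\infty)$, and moreover the eigenvalues and eigenfunctions coincide (the eigenfunctions being smooth by elliptic regularity, hence lying in every $L_p$); see e.g. the references on $L_p$-spectral theory. Consequently the eigenvalue sequence $(\lambda_j)$ and the numbers $ae^{T\lambda_j}+\int_0^T b(t)e^{t\lambda_j}\,\rd t$, $be^{\lambda_j T}$, and $\int_0^T b(t)e^{t\lambda_j}\,\rd t$ appearing in Proposition~\ref{PP1} are the same for $A_p$ as for $A_2$. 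Hence $\ker(\Phi_T)=\{0\}$ on $L_p$ is equivalent to the corresponding eigenvalue condition, and likewise for~\eqref{spectE100x}; for~\eqref{spectE200} one notes that $\sigma_p\big(\int_0^T b(t)e^{tA_p}\,\rd t\big)$ is determined by the eigenvalues $\int_0^T b(t)e^{t\lambda_j}\,\rd t$. Alternatively, since $\Phi_T^0\in\mathcal{K}(L_p)$, one may argue directly that an eigenvector of $\Phi_T^0$ with eigenvalue $-1$ would, by a bootstrap using the smoothing of the semigroup, lie in $L_2$, reducing the $L_p$ claim to the $L_2$ claim.

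With these two reductions in place the three parts follow immediately. For~(a), $b\in C([0,T],\R^+)$ with $\int_0^T b\,\rd t>-a$ and $\alpha\le 0$ is exactly the sufficient condition in Proposition~\ref{PP1}(a) (using $e^{-\lambda_j(T-t)}\ge 1$ when $\lambda_j\le\alpha\le 0$, so that $\int_0^T b(t)e^{-\lambda_j(T-t)}\,\rd t\ge\int_0^T b(t)\,\rd t>-a$); the requirement $p\in[2,\infty)$ merely ensures the embedding $W^2_{p,\mathcal{B}}\hookrightarrow L_p$ inherits the $L_2$-spectral picture comfortably and guarantees~\eqref{comp}. For~(b), $b\le 1$ together with $\alpha<0$ (which needs $\underline{c}>0$ when $\delta=1$, and is automatic when $\delta=0$) gives $be^{\lambda_j T}\le e^{\lambda_j T}<1$, which is Proposition~\ref{PP1}(b). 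For~(c), $b\in C([0,T],\R^+)$ forces $\int_0^T b(t)e^{-\lambda_j t}\,\rd t\ge 0>-1$, which is Proposition~\ref{PP1}(c).

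The main obstacle I anticipate is the $p$-independence step: one must cite or justify that the $L_p$-eigenvalue problem for $A_p$ has the same eigenvalues and (smooth) eigenfunctions as the self-adjoint $L_2$-problem, and that for~\eqref{spectE200} the point spectrum of the compact operator $\int_0^T b(t)e^{tA_p}\,\rd t$ on $L_p$ is captured by those eigenvalues. Everything else is a direct translation of Proposition~\ref{PP1}, together with the elementary sign estimates on the quantities $\int_0^T b(t)e^{\pm\lambda_j t}\,\rd t$ using $\lambda_j\le\alpha\le 0$ and the Poincar\'e/positivity arguments that pin down the sign of $\alpha$ in the Dirichlet and Neumann cases.
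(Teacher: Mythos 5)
Your proposal is correct and follows essentially the same route as the paper: verify that $A_2$ satisfies~\eqref{As} with $\alpha\le 0$ (and $\alpha<0$ under the extra hypotheses in (b)), apply Proposition~\ref{PP1} to $A_2$, and transfer the conclusions to $A_p$ via the $p$-independence of the spectrum and the consistency of the semigroups. The only slight imprecision is your explanation of why $p\ge 2$ is needed in part (a): it is not about~\eqref{comp} (which holds for all $p\in(1,\infty)$) but about the inclusion $L_p(\Omega)\hookrightarrow L_2(\Omega)$ on the bounded domain, which gives $\Phi_T^{(p)}\subset\Phi_T^{(2)}$ and hence lets the triviality of the kernel pass from $L_2$ to $L_p$ — the kernel condition~\eqref{spect} is not a priori an eigenvalue condition for $A_p$ outside the Hilbert space setting.
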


\begin{proof}
It is well known (e.g. see~\cite{Amann_Teubner}) that  $A_p\in \mathcal{H}\big(W_{p,\mathcal{B}}^2(\Omega),L_p(\Omega)\big)$ and that $A_2$ satisfies~\eqref{As} on the Hilbert space $L_2(\Omega)$ with $\alpha\le 0$. Moreover, the (point) spectra $\sigma(A_p)=\sigma(A_2)$ coincide~\cite{AmannIsrael}. We can now apply  Proposition~\ref{PP1} to the operator $A_2$.\vspace{2mm}

 {\bf (a)} The imposed assumptions  imply~\eqref{spect} for $A_2$ by Proposition~\ref{PP1}~{\bf (a)}; that is, $\mathrm{ker}\big(\Phi_T^{(2)}\big)=\{0\}$ for
\begin{equation*}
\Phi_T^{(q)}:=\left(a\,e^{TA_q}+\int_0^Tb(t)\,e^{tA_q}\,\rd t\right)\in \mathcal{L}\big(L_q(\Omega),W_{q,\mathcal{B}}^2(\Omega)\big)\,,\quad q\in [2,\infty)\,.
\end{equation*}
Since $e^{TA_2}\big\vert_{L_p(\Omega)}=e^{TA_p}$ for $p\ge 2$ (see e.g.~\cite{AmannIsrael}), we have $\Phi_T^{(p)}\subset \Phi_T^{(2)}$ and thus  $\mathrm{ker}\big(\Phi_T^{(p)}\big)=\{0\}$. That is,~$A_p$ satisfies~\eqref{spect}.\vspace{2mm}

{\bf (b)} Since  $\sigma(A_p)=\sigma(A_2)\subset (-\infty,\alpha]$ with $\alpha<0$ (due to assumption~\eqref{g1} and the fact that $\underline{c}>0$ if $\delta=1$), the assertion follows from Proposition~\ref{PP1}~{\bf (b)}.\vspace{2mm}

{\bf (c)} Since $-1$ is an eigenvalue of $\int_0^T b(t)\,e^{tA_p}\,\rd t$ if and only if it is an eigenvalue of $\int_0^T b(t)\,e^{tA_2}\,\rd t$ by parabolic regularity theory, the assertion follows from Proposition~\ref{PP1}~{\bf (c)}.
\end{proof}

Corollary~\ref{C1} extends to the interpolation-extrapolation scale of $A_p\in \mathcal{H}\big(W_{p,\mathcal{B}}^2(\Omega),L_p(\Omega)\big)$, see the proof of Theorem~\ref{TTT} and \cite{LQPP,Amann_Teubner} for interpolation-extrapolation scales.\\

Another example are fourth-order operators.  Given $\delta\in\{0,1\}$ define the boundary operator
\begin{subequations}\label{AA}
\begin{equation}
\mathsf{B}u:=(1-\delta)\partial_\nu u+\delta \Delta u
\end{equation}
(i.e. for $\delta=0$ and $\delta=1$ we consider clamped respectively pinned boundary conditions) and set
\begin{equation}
 W_{p,\mathsf{B}}^4(\Omega):=\big\{u\in W_{p}^4(\Omega)\,;\, \mathsf{B}u=0\ \text{on}\ \partial\Omega\big\}
\end{equation}
for $p\in (1,\infty)$. Let $d_1>0$, $d_2\ge 0$ and set
\begin{equation}\label{AAp}
\mathsf{A}_pu:=-d_1\Delta^2u+d_2\Delta u\,,\quad u\in W_{p,\mathsf{B}}^4(\Omega)\,.
\end{equation}
\end{subequations}

Verbatim the same proof as for Corollary~\ref{C1} yields:

\begin{cor}\label{C2}
Assume~\eqref{AA}. For the operator $\mathsf{A}_p\in \mathcal{H}\big(W_{p,\mathsf{B}}^4(\Omega),L_p(\Omega)\big)$ defined in~\eqref{AAp} with $p\in (1,\infty)$ we have:\vspace{2mm}

{\bf (a)}   Condition~\eqref{spect} is satisfied if~ $b\in C([0,T],\R^+)$ with $\displaystyle\int_0^T b(t)\,\rd t> -a$ and $p\in [2,\infty)$.\vspace{2mm}

{\bf (b)} Condition~\eqref{spectE100x} is satisfied if $b\le 1$.\vspace{2mm}

{\bf (c)} Condition~\eqref{spectE200} is satisfied if $b\in C([0,T],\R^+)$.
 
\end{cor}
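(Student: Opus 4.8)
The plan is to mimic the proof of Corollary~\ref{C1} verbatim, exploiting that the fourth-order operator $\mathsf{A}_p$ shares all the structural features used there: it generates an analytic semigroup with a compactly embedded domain, its $L_2$-realization is self-adjoint and bounded above, and its spectrum is $p$-independent. Concretely, I would first recall from the literature (e.g.~\cite{Amann_Teubner} together with the ellipticity of $\Delta^2$ and the admissibility of the boundary operator $\mathsf{B}$ for clamped/pinned conditions) that $\mathsf{A}_p\in\mathcal{H}\big(W^4_{p,\mathsf{B}}(\Omega),L_p(\Omega)\big)$, that $W^4_{p,\mathsf{B}}(\Omega)$ embeds compactly into $L_p(\Omega)$ so~\eqref{comp} holds, and that $\mathsf{A}_2$ satisfies~\eqref{As} on the Hilbert space $L_2(\Omega)$. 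For the latter one computes, via Green's formula, that for $u\in W^4_{2,\mathsf{B}}(\Omega)$
\begin{equation*}
(\mathsf{A}_2 u\vert u)_{L_2}=-d_1\|\Delta u\|_{L_2}^2-d_2\|\nabla u\|_{L_2}^2\le 0\,,
\end{equation*}
using that the boundary terms vanish for both $\delta=0$ (where $\partial_\nu u=0$, and one also needs $u=0$, which is part of the clamped condition — here I should double-check the precise definition of $\mathsf{B}$ intended, since clamped plates usually mean $u=\partial_\nu u=0$) and $\delta=1$ (where $\Delta u=0$ on $\partial\Omega$ kills the term $\int_{\partial\Omega}\Delta u\,\partial_\nu u$, and pinned means additionally $u=0$). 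Hence $\mathsf{A}_2\le\alpha$ with $\alpha\le 0$, and in fact $\alpha<0$ by a Poincaré-type inequality since $0$ is not an eigenvalue (a harmonic-biharmonic function vanishing in the appropriate sense is zero).

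Next I would invoke the spectral $p$-independence: by parabolic regularity theory and the consistency of the semigroups $e^{t\mathsf{A}_p}$ across the scale (the analogue of~\cite{AmannIsrael} for higher-order operators), one has $\sigma(\mathsf{A}_p)=\sigma(\mathsf{A}_2)$ and these spectra consist of real eigenvalues $\alpha\ge\lambda_1\ge\lambda_2\ge\cdots\to-\infty$. With this in hand, all three assertions follow from Proposition~\ref{PP1} applied to $\mathsf{A}_2$ exactly as in Corollary~\ref{C1}. For (a): the hypotheses $b\in C([0,T],\R^+)$, $\int_0^T b(t)\,\rd t>-a$, together with $\alpha\le 0$, give condition~\eqref{spect} for $\mathsf{A}_2$ by Proposition~\ref{PP1}(a); then the inclusion $\Phi_T^{(p)}\subset\Phi_T^{(2)}$ for $p\ge 2$ (because $e^{t\mathsf{A}_2}|_{L_p}=e^{t\mathsf{A}_p}$) transports $\ker=\{0\}$ down to $L_p$. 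For (b): $\sigma(\mathsf{A}_p)=\sigma(\mathsf{A}_2)\subset(-\infty,\alpha]$ with $\alpha<0$, and $b\le 1$, so $b\,e^{\lambda_j T}\le e^{\lambda_j T}<1$, giving~\eqref{spectE100x} via Proposition~\ref{PP1}(b) — note that unlike the second-order Neumann case no extra hypothesis on a zeroth-order coefficient is needed here, since $\mathsf{A}_p$ has no such term and $\alpha<0$ comes for free. For (c): $-1$ is an eigenvalue of $\int_0^T b(t)e^{t\mathsf{A}_p}\,\rd t$ iff it is one of $\int_0^T b(t)e^{t\mathsf{A}_2}\,\rd t$ by regularity, and the latter is excluded when $b\ge 0$ by Proposition~\ref{PP1}(c) since each $\int_0^T b(t)e^{-\lambda_j t}\,\rd t>0\ne -1$.

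The only genuine obstacle is making sure the two black-box inputs are legitimately available for fourth-order operators: (i) that $\mathsf{A}_p$ lies in $\mathcal{H}(W^4_{p,\mathsf{B}}(\Omega),L_p(\Omega))$ with the stated boundary conditions — this requires the Lopatinskii–Shapiro condition for the pair $(\Delta^2,\mathsf{B})$, which is classical for clamped and pinned (Navier) boundary conditions on a smooth domain; and (ii) the $p$-independence of the spectrum, i.e. the analogue of~\cite{AmannIsrael} in the higher-order setting, which again is standard parabolic $L_p$-theory. Since the corollary is asserted to follow by "verbatim the same proof," I would keep the write-up short, citing~\cite{Amann_Teubner} for (i) and~\cite{AmannIsrael,LQPP} for (ii), and then running the three-part argument above. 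The one point deserving a line of care is the self-adjointness/negativity computation for $\mathsf{A}_2$, since the vanishing of boundary terms depends on the exact reading of the clamped condition; I would state the Green-formula identity explicitly to fix this. Everything else is a direct transcription of the second-order case with "$A_p$" replaced by "$\mathsf{A}_p$" and "$W^2_{p,\mathcal{B}}$" by "$W^4_{p,\mathsf{B}}$".
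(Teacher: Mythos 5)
Your proposal is correct and takes essentially the same route as the paper, which disposes of Corollary~\ref{C2} with the single line ``verbatim the same proof as for Corollary~\ref{C1}'': you reduce everything to Proposition~\ref{PP1} for the $L_2$-realization, transport the kernel/spectral conditions to $L_p$ via consistency of the semigroups, and additionally write out the Green's-formula verification of~\eqref{As} for $\mathsf{A}_2$ together with the observation that $\alpha<0$ holds unconditionally (which is precisely why part~(b) needs no analogue of the hypothesis $\underline{c}>0$). The extra care you take with the boundary conditions and the negativity of $\mathsf{A}_2$ only fills in details the paper leaves implicit.
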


%%%%%%%%%%%%%%%%%%%%%%%%%%%%%%%%%%%%%%%%%%%%%%%%%%%%%%%% 
%%%%%%%%%%%%%%%%%%%%%%%%%%%%%%%%%%%%%%%%%%%%%%%%%%%%%%%%
\section{Examples for the Nonlinearity $f$}\label{Sec7}
%%%%%%%%%%%%%%%%%%%%%%%%%%%%%%%%%%%%%%%%%%%%%%%%%%%%%%%%
%%%%%%%%%%%%%%%%%%%%%%%%%%%%%%%%%%%%%%%%%%%%%%%%%%%%%%%%

We provide examples for nonlinearities $f$ satisfying~\eqref{A2}.  We first state a slightly more general case of  functions considered in Corollary~\ref{C11}.

\begin{prop}\label{F1}
Let $\ell>0$, $g\in C\big([0,T]\times E_\theta,E_\gamma)$ with $g(t,0)=0$, $t\in [0,T]$, such that
  there is $c>0$ with  
\begin{equation}\label{F2X}
\|g(t,v)-g(t,w)\|_{\gamma}\le c\,\left(\|v\|_\theta^\ell+\|w\|_\theta^\ell\right)\, \|v-w\|_{\theta}\,,\qquad t\in [0,T]\,,\quad v,w\in E_\theta\,.
\end{equation}
Set
$$
f(u)(t):=g\big(t,u(t)\big)\,,\qquad t\in [0,T]\,,\quad u\in C_\theta\big((0,T],E_\theta\big)\,.
$$
Then 
$$
f:C_{\theta}\big((0,T],E_\theta\big)\to C_{\theta(\ell+1)}\big((0,T],E_\gamma\big)
$$ 
 satisfies~\eqref{A2} with $\nu=\theta(\ell+1)$.
\end{prop}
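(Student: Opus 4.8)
The statement asserts that pointwise composition with a Lipschitz-type nonlinearity $g$ on $E_\theta$ induces a map between the time-weighted spaces with the claimed weight $\nu = \theta(\ell+1)$, and that it obeys the structural bound~\eqref{A2}. The plan is to verify three things in order: (i) that $f(u)$ is continuous from $(0,T]$ into $E_\gamma$; (ii) that $t^{\theta(\ell+1)}\|f(u)(t)\|_\gamma \to 0$ as $t\to 0^+$, so that $f(u)$ genuinely lies in $C_{\theta(\ell+1)}((0,T],E_\gamma)$; and (iii) the Lipschitz estimate~\eqref{A2} on the ball of radius $L$, together with $f(0)=0$.

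First I would record the elementary observation (already used in the proof of Corollary~\ref{C11}) that for $u\in C_\theta((0,T],E_\theta)$ one has $\|u(t)\|_\theta \le t^{-\theta}\,\|u\|_{C_\theta((0,T],E_\theta)}$ for $t\in(0,T]$, and moreover $s\mapsto \|u\|_{C_\theta((0,s],E_\theta)}$ tends to $0$ as $s\to 0^+$. For continuity, fix $t_0\in(0,T]$; since $u$ is continuous into $E_\theta$ at $t_0$ and $g$ is jointly continuous from $[0,T]\times E_\theta$ into $E_\gamma$, the composite $t\mapsto g(t,u(t))$ is continuous into $E_\gamma$ at $t_0$. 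For the decay at $0$, apply~\eqref{F2X} with $w=0$ and use $g(t,0)=0$ to get
$$
\|f(u)(t)\|_\gamma = \|g(t,u(t))\|_\gamma \le c\,\|u(t)\|_\theta^{\ell+1} \le c\,t^{-\theta(\ell+1)}\,\|u\|_{C_\theta((0,t],E_\theta)}^{\ell+1}\,,
$$
so that $t^{\theta(\ell+1)}\|f(u)(t)\|_\gamma \le c\,\|u\|_{C_\theta((0,t],E_\theta)}^{\ell+1}\to 0$ as $t\to 0^+$. This simultaneously gives membership in $C_{\theta(\ell+1)}((0,T],E_\gamma)$ and, taking the supremum over $t$, the norm bound $\|f(u)\|_{C_{\theta(\ell+1)}((0,T],E_\gamma)}\le c\,\|u\|_{C_\theta((0,T],E_\theta)}^{\ell+1}$; in particular $f(0)=0$.

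For the Lipschitz estimate, take $v,w\in C_\theta((0,T],E_\theta)$ with $\|v\|_{C_\theta((0,T],E_\theta)}+\|w\|_{C_\theta((0,T],E_\theta)}\le L$. For each $t\in(0,T]$, apply~\eqref{F2X} and then bound $\|v(t)\|_\theta^\ell+\|w(t)\|_\theta^\ell \le t^{-\theta\ell}\big(\|v\|_{C_\theta}^\ell+\|w\|_{C_\theta}^\ell\big) \le 2\,t^{-\theta\ell}L^\ell$ and $\|v(t)-w(t)\|_\theta \le t^{-\theta}\|v-w\|_{C_\theta((0,T],E_\theta)}$, obtaining
$$
t^{\theta(\ell+1)}\,\|f(v)(t)-f(w)(t)\|_\gamma \le 2c\,L^\ell\,\|v-w\|_{C_\theta((0,T],E_\theta)}\,.
$$
Taking the supremum over $t\in(0,T]$ yields~\eqref{A2} with $c_T = 2c$ (independent of $T$ here). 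I would then note that $\nu=\theta(\ell+1)\in[0,1)$ precisely under the hypothesis $\theta<1/(\ell+1)$ of~\eqref{A1Y}, so all requirements of~\eqref{A} are met.

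\textbf{Main obstacle.} None of the estimates is deep; the only point requiring a little care is the continuity claim (i), where one must combine continuity of $u$ into $E_\theta$ with joint continuity of $g$ — this is routine but should be stated cleanly since $g$ is merely continuous, not Lipschitz in $t$. Everything else is a direct application of the weighted-norm inequality $\|u(t)\|_\theta\le t^{-\theta}\|u\|_{C_\theta}$ together with~\eqref{F2X}.
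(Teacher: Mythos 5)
Your proof is correct and follows essentially the same route as the paper, which simply cites the two facts $t^\theta\|u(t)\|_\theta\le\|u\|_{C_\theta((0,t],E_\theta)}$ and $\lim_{t\to 0^+}\|u\|_{C_\theta((0,t],E_\theta)}=0$ and leaves the rest to the reader; you have merely spelled out the routine verification of continuity, the decay at $t=0$, and the Lipschitz bound. No gaps.
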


\begin{proof}
This readily follows from the facts that 
$$
t^\theta\|u(t)\|_\theta\le \|u\|_{C_{\theta}((0,t],E_\theta)}\,,\quad t\in (0,T]\,,\qquad \lim_{t\to 0^+}\|u\|_{C_{\theta}((0,t],E_\theta)}= 0\,.
$$
\end{proof}

 The following result on Nemytskii operators is useful for checking condition~\eqref{F2X}:

\begin{lem}\label{g-Lemma}
Let $\Omega$ be an open subset of $\R^n$. Consider $g\in C^1(\R,\R)$ with $g(0)=g'(0)=0$ and
\begin{equation}\label{keyx}
\vert  g'(r)- g'(s)\vert \le c\big(\vert r\vert^{\ell-1}+\vert s\vert^{\ell-1}\big) \vert r-s\vert  \,,\quad r,s\in\R\,,
\end{equation}
for some constants $\ell\ge 1$ and $c>0$. 
Let $p\in [1,\infty)$ and $\mu\in (0,1)$.  Then $g(w)\in W_{p}^{\mu}(\Omega)$ for every~${w\in W_{p}^{\mu}(\Omega)\cap L_\infty (\Omega)}$ and $g(0)=0$.
 Moreover,
there is $K>0$ with
\begin{equation*}
    \begin{split}
\|g(w_1)-g(w_2)\|_{W_{p}^\mu}
     &\le  K \big(\| w_1\|_{\infty}^{\ell}+\| w_2\|_{\infty}^{\ell}\big) \| w_1-w_2\|_{W_{p}^\mu}\\
&\quad
+ K \big(\| w_1\|_{\infty}^{\ell-1}+\| w_2\|_{\infty}^{\ell-1}\big) \big(\| w_1\|_{W_{p}^\mu}+\| w_2\|_{W_{p}^\mu}\big) \| w_1-w_2\|_{\infty}
    \end{split}
    \end{equation*}
for  all $w_1,\, w_2\in W_{p}^{\mu}(\Omega)\cap L_\infty (\Omega)$.
\end{lem}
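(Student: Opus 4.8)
The statement is a Nemytskii-operator estimate in the fractional Sobolev space $W_p^\mu(\Omega)$ for a $C^1$ nonlinearity $g$ with $g(0)=g'(0)=0$ and a Hölder-type growth on $g'$. The natural route is to use an intrinsic (Gagliardo–Slobodeckij) seminorm for $W_p^\mu$, namely
\[
[w]_{W_p^\mu}^p = \int_\Omega\int_\Omega \frac{|w(x)-w(y)|^p}{|x-y|^{n+\mu p}}\,\mathrm{d}x\,\mathrm{d}y,
\]
together with $\|w\|_{W_p^\mu} \simeq \|w\|_p + [w]_{W_p^\mu}$. The plan is: (i) verify the $L_p$-part of the estimate by a pointwise bound on $|g(w_1(x))-g(w_2(x))|$; (ii) verify the seminorm part by a pointwise bound on the finite difference $|(g(w_1)-g(w_2))(x)-(g(w_1)-g(w_2))(y)|$ in terms of $|w_1(x)-w_1(y)|$, $|w_2(x)-w_2(y)|$, $|w_1(x)-w_2(x)|$, $|w_1(y)-w_2(y)|$ with coefficients controlled by $\|w_i\|_\infty$; then integrate against the Gagliardo kernel. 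The membership $g(w)\in W_p^\mu$ for $w\in W_p^\mu\cap L_\infty$ and $g(0)=0$ follows immediately from the difference bound applied with $w_2=0$ (using $g(0)=0$).

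\textbf{The two pointwise inequalities.} Everything reduces to elementary calculus estimates. Since $g(0)=g'(0)=0$ and \eqref{keyx} holds, one first records the consequences
\[
|g'(r)| \le c'\,|r|^{\ell},\qquad |g(r)| \le c''\,|r|^{\ell+1},\qquad |g(r)-g(s)| \le c'''\big(|r|^\ell+|s|^\ell\big)\,|r-s|,
\]
obtained by integrating \eqref{keyx} from $0$ (the first follows from $g'(r)=g'(r)-g'(0)$; the third from $g(r)-g(s)=\int_s^r g'(\tau)\,\mathrm{d}\tau$ and the first bound). For the $L_p$-part, $|g(w_1(x))-g(w_2(x))| \le c'''(\|w_1\|_\infty^\ell+\|w_2\|_\infty^\ell)\,|w_1(x)-w_2(x)|$ pointwise; raise to the $p$-th power and integrate. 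For the seminorm part, write, with $h:=g(w_1)-g(w_2)$,
\[
h(x)-h(y) = \big[g(w_1(x))-g(w_1(y))\big] - \big[g(w_2(x))-g(w_2(y))\big],
\]
and estimate this difference of differences. The clean way is the integral identity
\[
g(a)-g(b)-g(a')+g(b') = \int_0^1\!\!\int_0^1 \partial_\tau\partial_\sigma\, g\big(\sigma a+(1-\sigma)b \text{ vs. } a',b'\big)\ldots
\]
but it is simpler to split as
\[
h(x)-h(y) = \underbrace{\int_0^1\!\big[g'(w_1(y)+t(w_1(x)-w_1(y))) - g'(w_2(y)+t(w_2(x)-w_2(y)))\big](w_1(x)-w_1(y))\,\mathrm{d}t}_{(I)}
\]
\[
\qquad\qquad + \underbrace{\int_0^1 g'\big(w_2(y)+t(w_2(x)-w_2(y))\big)\,\big[(w_1(x)-w_1(y)) - (w_2(x)-w_2(y))\big]\,\mathrm{d}t}_{(II)}.
\]
In $(II)$ one bounds $|g'(\cdot)| \le c'(\|w_1\|_\infty^\ell+\|w_2\|_\infty^\ell)$ and $|(w_1(x)-w_1(y))-(w_2(x)-w_2(y))| \le |(w_1-w_2)(x)| + |(w_1-w_2)(y)| \le 2\|w_1-w_2\|_\infty$ — wait, better to keep it as $|(w_1-w_2)(x)-(w_1-w_2)(y)|$, which after integration against the kernel gives $2\|w_1-w_2\|_\infty$ times… no: it gives $[w_1-w_2]_{W_p^\mu}$, but that term is not on the right-hand side. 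So in $(II)$ we instead bound $|(w_1(x)-w_1(y)) - (w_2(x)-w_2(y))| \le |(w_1-w_2)(x)| + |(w_1-w_2)(y)|$ and pair each summand with the kernel, one of whose two Gagliardo integrals is then bounded crudely — this is exactly why the final estimate carries the cross term $(\|w_1\|_{W_p^\mu}+\|w_2\|_{W_p^\mu})\|w_1-w_2\|_\infty$ rather than a cleaner product. Hmm, let me re-partition so the bookkeeping matches the claimed right-hand side: in $(II)$ bound $|g'(\ldots)|\le c'(\|w_1\|_\infty^\ell+\|w_2\|_\infty^\ell)$ and $|(w_1(x)-w_1(y))-(w_2(x)-w_2(y))|\le |(w_1-w_2)(x)-(w_1-w_2)(y)|$; this contributes $(\|w_1\|_\infty^\ell+\|w_2\|_\infty^\ell)[w_1-w_2]_{W_p^\mu}$, which together with the $L_p$ part of $w_1-w_2$ gives the first term $K(\|w_1\|_\infty^\ell+\|w_2\|_\infty^\ell)\|w_1-w_2\|_{W_p^\mu}$. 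In $(I)$, apply \eqref{keyx}: the argument difference is $(w_2(y)+t(w_2(x)-w_2(y))) - (w_1(y)+t(w_1(x)-w_1(y))) = (1-t)(w_2-w_1)(y) + t(w_2-w_1)(x)$, of modulus $\le \|w_1-w_2\|_\infty$, and the factor $(|r|^{\ell-1}+|s|^{\ell-1}) \le C(\|w_1\|_\infty^{\ell-1}+\|w_2\|_\infty^{\ell-1})$; so $|(I)| \le K(\|w_1\|_\infty^{\ell-1}+\|w_2\|_\infty^{\ell-1})\,\|w_1-w_2\|_\infty\,|w_1(x)-w_1(y)|$. Raising to the $p$-th power and integrating against $|x-y|^{-n-\mu p}$ produces $[w_1]_{W_p^\mu}$ (and symmetrically one may symmetrize to get $[w_1]+[w_2]$), yielding the second term of the claimed bound.

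\textbf{Expected obstacle and remaining details.} The only genuinely delicate point is the bookkeeping in the decomposition above: one must route the term $(w_1(x)-w_1(y))-(w_2(x)-w_2(y))$ so that it is absorbed into $[w_1-w_2]_{W_p^\mu}$ (giving the first RHS term) rather than spilling into an unwanted Gagliardo seminorm of $w_1-w_2$ — i.e. the split $(I)+(II)$ must put the $g'$-difference against $\nabla w_1$ and the $g'$-value against $\nabla(w_1-w_2)$, not the other way around. Everything else is routine: combining the $L_p$ estimate and the two seminorm estimates, using $\|w\|_{W_p^\mu}\simeq\|w\|_p+[w]_{W_p^\mu}$, and collecting constants into a single $K=K(n,p,\mu,\ell,c,\Omega)$. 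The cases $\ell=1$ (where $|r|^{\ell-1}=1$) and general $\ell\ge 1$ are handled uniformly by the bounds above since $\|w_i\|_\infty<\infty$. Finally, taking $w_2=0$ in the conclusion and using $g(0)=0$ gives both $g(w)\in W_p^\mu(\Omega)$ for $w\in W_p^\mu\cap L_\infty$ and the stated vanishing, completing the proof.
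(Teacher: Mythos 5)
Your proof is correct, and it fills in exactly what the paper leaves implicit: the paper's own ``proof'' consists of noting $\vert g'(r)\vert\le c\vert r\vert^{\ell}$ (your first derived consequence) and deferring the rest to \cite[Lemma~4.1]{MW24}, which carries out the same Gagliardo--Slobodeckij seminorm computation you give, with the same routing of the $g'$-difference against $w_1(x)-w_1(y)$ (yielding the $\Vert w_1-w_2\Vert_\infty\,[w_1]_{W_p^\mu}$ term) and of the $g'$-value against $(w_1-w_2)(x)-(w_1-w_2)(y)$ (yielding the $[w_1-w_2]_{W_p^\mu}$ term). The only point worth stating explicitly is that for $\mu\in(0,1)$ and a general open $\Omega$ the norm $\Vert\cdot\Vert_p+[\cdot]_{W_p^\mu}$ is the definition of $W_p^\mu(\Omega)$ being used, so no extension argument is needed.
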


\begin{proof}
Since $\vert g'(r)\vert\le c\vert r\vert^\ell$ for $r\in \R$, the proof is the same as \cite[Lemma~4.1]{MW24}.
\end{proof}

\subsection*{A Nonlocal Example}

As pointed out before our theory covers the case of nolinearities $f=f(u)$ depending nonlocally with respect to time on $u$. To give an example, consider a kernel
\begin{subequations}\label{h}
\begin{equation}
k\in C\big((0,T]^2\times E_\theta,E_\gamma\big)\,,\qquad k(t,s,0)=0\,,\quad (t,s)\in (0,T]^2\,,
\end{equation}
such that
\begin{equation}
\|k(t,s,v)-k(t,s,w)\|_\gamma\le h_T(t,s)\,\big(\|v\|_\theta^\ell+\|w\|_\theta^\ell\big)\, \|v-w\|_\theta\,,\qquad (t,s)\in (0,T]^2\,,\quad v,w\in E_\theta\,,
\end{equation}
for some $\ell>0$ and some non-negative measurable function $h_T$ on $(0,T]^2$ such that there is $\nu\in [0,1)$ with
\begin{equation}\label{73c}
\left[t\mapsto\int_0^T h_T(t,s)\, s^{-\theta(\ell+1)}\,\rd s\right]\in C_\nu\big((0,T],\R\big)\,.
\end{equation}
\end{subequations}

\begin{lem}
Assume~\eqref{h}. Then, the function $f$, defined as
$$
f(v)(t):=\int_0^T k\big(t,s,v(s)\big)\,\rd s\,,\quad t\in (0,T]\,,\quad v\in C_\theta\big((0,T],E_\theta\big)\,,
$$
satisfies~\eqref{A2}.
\end{lem}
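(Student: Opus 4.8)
The plan is to verify the two requirements of \eqref{A2} directly: the Lipschitz estimate with the factor $c_T\,L^\ell$ on balls of radius $L\in(0,1)$, and the vanishing $f(0)=0$. The second is immediate since $k(t,s,0)=0$ for all $(t,s)\in(0,T]^2$ forces $f(0)(t)=\int_0^T k(t,s,0)\,\rd s=0$. For the first, fix $v,w\in C_\theta\big((0,T],E_\theta\big)$ with $\|v\|_{C_\theta((0,T],E_\theta)}+\|w\|_{C_\theta((0,T],E_\theta)}\le L$, and estimate pointwise in $t\in(0,T]$ using the kernel bound:
\begin{align*}
\|f(v)(t)-f(w)(t)\|_\gamma
&\le \int_0^T \|k(t,s,v(s))-k(t,s,w(s))\|_\gamma\,\rd s\\
&\le \int_0^T h_T(t,s)\,\big(\|v(s)\|_\theta^\ell+\|w(s)\|_\theta^\ell\big)\,\|v(s)-w(s)\|_\theta\,\rd s\,.
\end{align*}
Now I would insert $\|v(s)\|_\theta\le s^{-\theta}\|v\|_{C_\theta((0,T],E_\theta)}$ and the analogous bounds for $w$ and for $v-w$; this produces the factor $s^{-\theta(\ell+1)}$ together with the constants $\big(\|v\|_{C_\theta}^\ell+\|w\|_{C_\theta}^\ell\big)\le 2L^\ell$ and $\|v-w\|_{C_\theta((0,T],E_\theta)}$. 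Thus
$$
\|f(v)(t)-f(w)(t)\|_\gamma\le 2L^\ell\,\Big(\int_0^T h_T(t,s)\,s^{-\theta(\ell+1)}\,\rd s\Big)\,\|v-w\|_{C_\theta((0,T],E_\theta)}\,.
$$

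The next step is to read off from hypothesis \eqref{73c} that the function $t\mapsto\int_0^T h_T(t,s)\,s^{-\theta(\ell+1)}\,\rd s$ belongs to $C_\nu\big((0,T],\R\big)$; denote its $C_\nu$-norm by $\kappa_T$, which is finite and depends only on the data. Multiplying the last display by $t^\nu$ and taking the supremum over $t\in(0,T]$ gives
$$
\|f(v)-f(w)\|_{C_\nu((0,T],E_\gamma)}\le 2\kappa_T\,L^\ell\,\|v-w\|_{C_\theta((0,T],E_\theta)}\,,
$$
which is exactly \eqref{A2} with $c_T:=2\kappa_T$ (and, incidentally, shows $f$ maps into $C_\nu\big((0,T],E_\gamma\big)$, since taking $w=0$ and using $f(0)=0$ shows $t^\nu\|f(v)(t)\|_\gamma\to 0$ as $t\to0^+$ because the $C_\nu$-membership in \eqref{73c} encodes that decay). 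One small point to check en route: measurability and integrability of $s\mapsto k(t,s,v(s))$ — this follows from the joint continuity of $k$ in \eqref{h}, continuity of $v$ on $(0,T]$, and the integrability of $s^{-\theta(\ell+1)}$ near $0$ guaranteed implicitly by \eqref{73c} (for the bound to be finite one needs $\theta(\ell+1)<1$, which is consistent with the $\nu$-framework; strictly the hypothesis \eqref{73c} already presupposes the integral is finite for a.e.\ $t$).

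The only mildly delicate point — the ``main obstacle'' in an otherwise routine estimate — is the passage from the pointwise-in-$t$ bound to the $C_\nu$-statement, i.e.\ confirming that the weight built into \eqref{73c} is precisely the one that survives. Everything upstream (splitting the difference, applying the kernel Lipschitz bound, inserting the $s^{-\theta}$ weights from the $C_\theta$-norm) is mechanical; the design of hypothesis \eqref{73c} is exactly what makes the final sup finite and the limit at $t=0^+$ vanish, so the proof amounts to recognizing that the estimate reproduces the left-hand side of \eqref{73c} verbatim. Hence no genuine difficulty arises beyond bookkeeping of constants.
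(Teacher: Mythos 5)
Your proof is correct and follows essentially the same route as the paper's: bound the difference pointwise by the kernel Lipschitz estimate, insert the $s^{-\theta}$ weights to extract the $C_\theta$-norms, and invoke \eqref{73c} to obtain the $C_\nu$-bound. The paper's own proof is just the two-line version of this computation, so there is nothing to add beyond noting that your extra remarks on $f(0)=0$, measurability, and the vanishing limit at $t=0^+$ make explicit what the paper leaves implicit.
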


\begin{proof}
The assertion follows from the observation
\begin{align*}
\|f(v)(t)-&f(w)(t)\|_\gamma\le \int_0^T h_T(t,s)\,\big(\|v(s)\|_\theta^\ell+\|w(s)\|_\theta^\ell\big)\, \|v(s)-w(s)\|_\theta\,\rd s\\
&\le \int_0^T h_T(t,s)\,s^{-\theta(\ell+1)}\,\rd s\, \big(\|v\|_{C_\theta((0,T]),E_\theta)}^\ell+\|w\|_{C_\theta((0,T]),E_\theta)}^\ell\big)\, \|v-w\|_{C_\theta((0,T]),E_\theta)}
\end{align*}
for $t\in (0,T]$ and $v,w\in C_\theta((0,T]),E_\theta)$.
\end{proof}

For instance, $h_T(t,s)= c(t-s)_+^\lambda$ with $c>0$ satisfies~\eqref{73c} provided that $\lambda>-1$, $\theta(\ell+1)<1$, and $1+\nu+\lambda > \theta(\ell+1)$.\\

%%%%%%%%%%%%%%%%%%%%%%%%%%%%%%%%%%%%%%%%%%%%%%%%%%%%%%%%%%%%%%%%%
%%%%%%%%%%%%%%%%%%%%%%%%%%%%%%%%%%%%%%%%%%%%%%%%%%%%%%%%%%%%%%%%%
\section{Revisiting the  Diffusion Equation~\eqref{DD}}\label{Sec8}
%%%%%%%%%%%%%%%%%%%%%%%%%%%%%%%%%%%%%%%%%%%%%%%%%%%%%%%%%%%%%%%%%
%%%%%%%%%%%%%%%%%%%%%%%%%%%%%%%%%%%%%%%%%%%%%%%%%%%%%%%%%%%%%%%%%

Let $\Omega\subset\R^n$ be an open and bounded (sufficiently smooth) domain. We consider the semilinear version of problem~\eqref{DD}
\begin{subequations}\label{exx}
\begin{align}
\partial_t u-\mathrm{div}\big(d(x)\nabla u\big)+c(x) u &= g(t,x,u)\,,\qquad (t,x)\in (0,T]\times\Omega\,,\label{81a}\\
(1-\delta)u+\delta \partial_\nu u&=0\,,\qquad  (t,x)\in (0,T]\times\partial\Omega\,,\\
a\, u(T)+\int_0^T  b(t)\, u(t)\,\rd t&=M\,,
\end{align}
\end{subequations}
where $\delta\in\{0,1\}$ is fixed and where we impose that
\begin{subequations}\label{Assu}
\begin{equation}\label{key}
\begin{split}
&d\in C^1\big(\bar\Omega,\R_{sym}^{n\times n}\big) \ \text{ with }\  d(x)\zeta\cdot\zeta\ge \underline{d}\, \vert\zeta\vert^2\,,\quad (\zeta,x)\in\R^n\times \bar\Omega\,,\\
&c\in C(\bar\Omega,\R^+)\ \text{ with \ $\min_{\bar\Omega} c>0$ if $\delta=1$}\,,\\
&\text{$a\in \R$,  $b\in C^1([0,T],\R^+)$ with  $b(0)>0$ and $\int_0^T b(t)\,\rd t> -a$}\,. 
\end{split}
\end{equation}
Moreover, we assume that 
\begin{align}
g\in  C^{2}\big([0,T]\times\bar\Omega\times\R\big)\,,\qquad g(t,x,0)=0\,,\quad (t,x)\in [0,T]\times\bar\Omega\,,
\end{align}
and that there is $\ell>0$ with
\begin{align}\label{ggg}
\vert g(t,x,v)-g(t,x,w)\vert\le c\,\big(\vert v\vert^\ell+\vert w\vert^\ell\big)\,\vert v-w\vert\,,\quad \quad (t,x)\in [0,T]\times\bar\Omega\,,\quad v,w\in\R\,.
\end{align}
\end{subequations}
Then we can prove the following well-posedness result for problem~\eqref{exx}:

\begin{thm}\label{TTT}
Assume~\eqref{Assu} and let $p>n$ with $p\ge 2$ and 
\begin{equation}\label{81}
\max\left\{ \frac{n}{p}-\frac{2}{\ell+1}\,,\,-\frac{3}{2}+\delta\right\}<2\alpha< 0\,,\qquad 2\alpha\not=-1-\delta+\frac{1}{p}\,.
\end{equation}
There is $m(T)>0$ such that for each 
$$
\text{$M\in H_{p,\mathcal{B}}^{2\alpha+2}(\Omega)$\ with\   $\|M\|_{H_{p}^{2\alpha+2}}\le m(T)$},
$$ 
problem~\eqref{exx} admits a unique strong solution
$$
u\in C^1\big((0,T],L_p(\Omega)\big)\cap C\big((0,T],W_{p,\mathcal{B}}^2(\Omega)\big)\cap C\big([0,T],L_p(\Omega)\big)\,.
$$
%with $2\theta\in \big(\frac{n}{p}-\sigma+2,\frac{2}{q+1}\big)$.
\end{thm}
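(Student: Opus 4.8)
The plan is to recognize problem~\eqref{exx} as a concrete instance of the abstract nonlocal-in-time problem~\eqref{E} and to deduce Theorem~\ref{TTT} from Theorem~\ref{T0}; all the work lies in choosing the functional-analytic framework and in verifying the hypotheses \eqref{Gen}, \eqref{comp}, \eqref{spect}, and \eqref{A} (with some $\gamma\in(0,1]$). I would base everything on the interpolation--extrapolation scale generated by the second-order realization $A_p\in\mathcal H\big(W^2_{p,\mathcal B}(\Omega),L_p(\Omega)\big)$ from~\eqref{Ap}, but \emph{shift its base point}, setting
\[
E_0:=H^{2\alpha}_{p,\mathcal B}(\Omega)\,,\qquad E_1:=H^{2\alpha+2}_{p,\mathcal B}(\Omega)\,,
\]
with $A$ the corresponding extension of $A_p$, so that the requirement $M\in E_1$ of Theorem~\ref{T0} is exactly the regularity assumption on $M$. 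The constraint $2\alpha\ne -1-\delta+\tfrac1p$ in~\eqref{81} is precisely what guarantees that this extrapolated base space $E_0$ is a non-critical space of the scale admitting the stated boundary-condition description (the base point does not hit the exceptional trace exponent of the co-normal/adjoint boundary operator); see~\cite{LQPP,Amann_Teubner}. Since $2\alpha>-3/2+\delta$ keeps $E_0$ and $E_1$ in the range where this identification holds, \eqref{Gen} is valid because the scale extends $A_p\in\mathcal H$, and \eqref{comp} holds because $E_1=H^{2\alpha+2}_{p,\mathcal B}$ is compactly embedded in $E_0=H^{2\alpha}_{p,\mathcal B}$.

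For the kernel condition~\eqref{spect}: the hypotheses on $a$ and $b$ in~\eqref{Assu} --- $b\in C^1([0,T],\R^+)$, $b(0)>0$, $\int_0^Tb>-a$ --- together with $p\ge 2$ place us in the situation of Corollary~\ref{C1}\,{\bf(a)}, which gives $\ker\Phi_T=\{0\}$ for $A_p$ on $L_p$. A standard parabolic bootstrap --- exploiting that $e^{tA}$ smooths along the scale, exactly as in the proof of Corollary~\ref{C1} --- shows that any element of $\ker\Phi_T$ lying in $E_0$ in fact belongs to $\bigcap_s H^{2s}_{p,\mathcal B}\subset L_p$, so the kernel is unchanged upon passing to the shifted scale and \eqref{spect} holds there as well.

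The crux is \eqref{A}. Here I would choose
\[
\theta\in\Big(\tfrac{n}{2p}-\alpha,\ \tfrac1{\ell+1}\Big)\,,\qquad \gamma\in\Big(-\alpha,\ \min\big\{\theta,\ \tfrac12-\alpha\big\}\Big)\,,
\]
noting that the first interval is nonempty precisely because $2\alpha>\tfrac np-\tfrac2{\ell+1}$ in~\eqref{81}, and the second because $p>n$; in particular $0<\gamma\le1$, so $(\gamma,\theta)\ne(0,1)$. With these choices $E_\theta=H^{2(\alpha+\theta)}_{p,\mathcal B}$ embeds into $L_\infty(\Omega)$ by Sobolev's embedding theorem (since $2(\alpha+\theta)>\tfrac np$), $E_\gamma=H^{2(\alpha+\gamma)}_{p,\mathcal B}$ has exponent $2(\alpha+\gamma)\in(0,1)$, and $E_\theta\hookrightarrow E_\gamma$. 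Consequently Lemma~\ref{g-Lemma}, applied with $\mu=2(\alpha+\gamma)$ (and its straightforward $x$-dependent variant), combined with the growth bound~\eqref{ggg}, the identity $g(t,\cdot,0)=0$ (which forces $\partial_vg(t,\cdot,0)=0$ because $\ell>0$), and the $C^2$-regularity of $g$, shows that the superposition operator $v\mapsto g(t,\cdot,v(\cdot))$ maps $E_\theta$ into $E_\gamma$ and satisfies an estimate of the form~\eqref{F2X}. Proposition~\ref{F1} then yields
\[
f:C_\theta\big((0,T],E_\theta\big)\to C_\nu\big((0,T],E_\gamma\big)\,,\qquad \nu=\theta(\ell+1)\in[0,1)\,,
\]
obeying \eqref{A2}, so \eqref{A} holds with $\gamma\in(0,1]$. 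This Nemytskii estimate --- arranging the Sobolev exponents so that the $L_\infty$-embedding, the range of validity of Lemma~\ref{g-Lemma}, and the constraint $\theta(\ell+1)<1$ are met simultaneously --- is the step I expect to be the main obstacle, as it is where the precise form of~\eqref{81} enters; once it is settled, Theorem~\ref{T0} applies directly.

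It remains to combine the pieces. Since $f(0)=0$ (as $g(t,\cdot,0)=0$), Theorem~\ref{T0} provides, for $\|M\|_{H^{2\alpha+2}_p}\le m(T)$, a unique solution
\[
u\in C^1\big((0,T],E_0\big)\cap C\big((0,T],E_1\big)\cap C\big([0,T],E_0\big)\cap C_\theta\big((0,T],E_\theta\big)
\]
of the abstract problem, which by construction solves~\eqref{exx}. To reach the regularity stated in the theorem I would bootstrap within the scale: for $t>0$ the variation-of-constants representation of $u$ together with the smoothing estimates~\eqref{b0} puts $u(t)\in E_\beta$ for every $\beta<1+\gamma$; choosing $\beta\in[1-\alpha,1+\gamma)$, which is nonempty since $\gamma>-\alpha$, gives $u(t)\in H^{2\alpha+2\beta}_{p,\mathcal B}(\Omega)\hookrightarrow W^2_{p,\mathcal B}(\Omega)$, hence $u\in C\big((0,T],W^2_{p,\mathcal B}(\Omega)\big)$, and then $u'=A_pu+g(\cdot,u)\in C\big((0,T],L_p(\Omega)\big)$ yields $u\in C^1\big((0,T],L_p(\Omega)\big)$. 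Together with the continuity on $[0,T]$ furnished by Theorem~\ref{T0} this gives the asserted regularity, completing the proof.
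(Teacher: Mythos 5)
Your overall strategy (shift to the extrapolated scale $E_0=H^{2\alpha}_{p,\mathcal B}(\Omega)$, $E_1=H^{2\alpha+2}_{p,\mathcal B}(\Omega)$, verify \eqref{Gen}, \eqref{comp}, \eqref{spect}, \eqref{A}, and invoke Theorem~\ref{T0}) is the paper's, and your treatment of \eqref{spect} is an acceptable variant of theirs. But there is a genuine gap in the step you yourself flag as the crux, namely the verification of~\eqref{A}. You choose $\gamma>-\alpha$, so that $E_\gamma$ is a \emph{positive}-order space $H^{2(\alpha+\gamma)}_{p,\mathcal B}(\Omega)$ with $2(\alpha+\gamma)\in(0,1)$, and then try to get the Nemytskii estimate from Lemma~\ref{g-Lemma}. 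That lemma, however, requires $\ell\ge 1$ and the H\"older-type condition~\eqref{keyx} on $g'$, i.e.\ $\vert g'(r)-g'(s)\vert\le c\big(\vert r\vert^{\ell-1}+\vert s\vert^{\ell-1}\big)\vert r-s\vert$. Neither is available here: Theorem~\ref{TTT} allows any $\ell>0$, and the hypotheses~\eqref{Assu} only give the zeroth-order bound~\eqref{ggg} together with $g\in C^2$. From~\eqref{ggg} one does get $\vert\partial_vg(t,x,r)\vert\le c\vert r\vert^{\ell}$, but $C^2$-regularity does not upgrade this to~\eqref{keyx} (for $\ell>1$ it would force $\partial_v^2g$ to vanish at $v=0$ at a prescribed rate, which is not assumed; for $\ell<1$ the lemma simply does not apply). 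So your mapping property $v\mapsto g(t,\cdot,v):E_\theta\to E_\gamma$ is not established, and with it the application of Theorem~\ref{T0} collapses.

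The fix is exactly the point you missed about \emph{why} the paper extrapolates: choose $\gamma$ with $0<2\gamma<-2\alpha$ (possible since $\alpha<0$), so that $E_\gamma\hookleftarrow H^{2\alpha+2\gamma+\ve}_{p,\mathcal B}(\Omega)\hookleftarrow L_p(\Omega)$ is a \emph{negative}-order space. Then the only estimate needed is the elementary $L_p$-Lipschitz bound $\|g(t,v)-g(t,w)\|_{L_p}\le c(\|v\|_\infty^\ell+\|w\|_\infty^\ell)\|v-w\|_{L_p}$, which follows pointwise from~\eqref{ggg}, combined with $E_\theta\hookrightarrow L_\infty(\Omega)$; Lemma~\ref{g-Lemma} is not used at all in this theorem. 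The shift to $\alpha<0$ is thus not primarily about accommodating $M$, but about creating room for a $\gamma>0$ whose space $E_\gamma$ still contains $L_p(\Omega)$ (as the paper remarks after the proof). Note also that your final bootstrap to $W^2_{p,\mathcal B}(\Omega)$ leans on $\gamma>-\alpha$ and therefore has to be redone once $\gamma$ is corrected: the paper instead shows $g_\ve=g(\ve+\cdot,u_\ve)\in C\big([0,T-\ve],H^\mu_{p,\mathcal B}(\Omega)\big)$ for some $\mu\in(n/p,1)$, using $2\alpha+2>n/p$ and $g\in C^2$, and then applies the linear theory to $u_\ve$.
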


\begin{proof}
Let us first observe that~\eqref{ggg} implies
\begin{align}\label{estim}
\|g(t,v)-g(t,w)\|_{L_p}\le c\,\big(\|v\|_\infty^\ell+\|w\|_\infty^\ell\big)\,\|v-w\|_{L_p}\,,\quad v,w\in L_\infty(\Omega)\,,
\end{align}
where we use the shorthand $g(t,v)(x):=g(t,x,v(x))$.
Based on this estimate, we now aim at writing problem~\eqref{exx} in a form that fits into the framework of Theorem~\ref{T0}. This form will require, as a byproduct, only low regularity assumptions on $M$.

Let $q\in \{2,p\}$. We start by introducing as in \eqref{Ap} the operator $A_q\in\mathcal{H}\big(W_{q,\mathcal{B}}^2(\Omega),L_q(\Omega)\big)$ by 
\begin{equation*}
A_qu:=\mathrm{div}\big(d(x)\nabla u\big)-c(x) u\,,\quad u\in W_{q,\mathcal{B}}^2(\Omega)\,,
\end{equation*}
and note that $A_2=(A_2)^*\le \lambda_1-\underline{c}<0$, where $\lambda_1$  is the first eigenvalue of $A_2+c$ (with $\lambda_1<0$ if $\delta=0$ and $\lambda_1=0$ if $\delta=1$) and $\underline{c}:=\min_{\bar\Omega} c$. Also note from~\cite{AmannIsrael} that $\sigma(A_2)=\sigma(A_p)$. In fact, using the interpolation-extrapolation scale of order 1 corresponding to $A_q\in\mathcal{H}\big(W_{q,\mathcal{B}}^2(\Omega),L_q(\Omega)\big)$ with the complex interpolation functor (see \cite[Theorem~7.1; Equation (7.5)]{Amann_Teubner} and \cite[\S V.1]{LQPP}), which  gives the scale
\begin{equation*}
H_{q,\mathcal{B}}^{2\theta}(\Omega):=\left\{\begin{array}{ll} \{v\in H_{q}^{2\theta}(\Omega) \,:\, \mathcal{B} v=0 \text{ on } 
 \partial\Omega\}\,, &\delta+\frac{1}{q}<2\theta\le 2 \,,\\[3pt]
	 H_{q}^{2\theta}(\Omega)\,, & -2+\frac{1}{q}+\delta< 2\theta<\delta +\frac{1}{q}\,,\end{array} \right.
\end{equation*}
 and denoting by
$$
A_{q,\alpha}\in\mathcal{H}\big(H_{q,\mathcal{B}}^{2\alpha+2}(\Omega),H_{q,\mathcal{B}}^{2\alpha}(\Omega)\big)
$$
the $H_{q,\mathcal{B}}^{2\alpha}(\Omega)$-realization of $A_q$ (note that $-2+\delta+1/q\le -3/2+\delta<2\alpha$), it follows from~\cite[V.Theorem~2.1.3]{LQPP} that 
$$
\sigma(A_{q,\alpha})=\sigma(A_q)=\sigma(A_2)\,.
$$
Notice that $H_{q,\mathcal{B}}^{2\alpha+2}(\Omega)$ embeds compactly into $H_{q,\mathcal{B}}^{2\alpha}(\Omega)$ and that $A_{2,\alpha}=(A_{2,\alpha})^*\le\lambda_1-\underline{c}<0$ due to~\cite[V.Theorem~1.5.15]{LQPP}; that is, $A_{2,\alpha}$ satisfies~\eqref{As} (with $E_0=H_{2,\mathcal{B}}^{2\alpha}(\Omega))$ and $\alpha<0$ there). Therefore, 
setting
\begin{equation*}
\Phi_T^{(q,\alpha)}:=\left(a\,e^{TA_{q,\alpha}}+\int_0^Tb(t)\,e^{tA_{q,\alpha}}\,\rd t\right)\in \mathcal{L}\big(H_{q,\mathcal{B}}^{2\alpha}(\Omega),H_{q,\mathcal{B}}^{2\alpha+2}(\Omega)\big) \,,
\end{equation*}
we infer from Proposition~\ref{PP1}~{\bf (a)} and \eqref{key} that $\mathrm{ker}\big(\Phi_T^{(2,\alpha)}\big)=\{0\}$. Since $e^{TA_{2,\alpha}}\big\vert_{H_{p,\mathcal{B}}^{2\alpha}(\Omega)}=e^{TA_{p,\alpha}}$ as $p\ge 2$, we have $\Phi_T^{(p,\alpha)}\subset \Phi_T^{(2,\alpha)}$ and thus  
\begin{equation}\label{t2a}
\mathrm{ker}\big(\Phi_T^{(p,\alpha)}\big)=\{0\}\,.
\end{equation} 
That is,~$A_{p,\alpha}$ satisfies~\eqref{spect}.
Set $E_j:=H_{p,\mathcal{B}}^{2\alpha+2j}(\Omega)$ for $j=0,1$ so that 
\begin{equation}\label{t1}
A_{p,\alpha}\in\mathcal{H}(E_1,E_0)
\end{equation} 
and put
$$
E_\theta:=[E_0,E_1]_\theta=\big[H_{p,\mathcal{B}}^{2\alpha}(\Omega),H_{p,\mathcal{B}}^{2\alpha+2}(\Omega)\big]_\theta\,.
$$
It then follows from \cite[Theorem~2.3]{Guidetti91} and the chain of embeddings
$$H_p^{s+\ve}(\Omega)\hookrightarrow B_{p,p}^{s}(\Omega)\hookrightarrow H_p^{s-\ve}(\Omega)\,,\quad s\in\R\,,\quad \ve>0\,,
$$
that
\begin{equation}\label{interpol}
H_{p,\mathcal{B}}^{2\alpha+2\theta+\ve}(\Omega)\hookrightarrow E_\theta\hookrightarrow  H_{p,\mathcal{B}}^{2\alpha+2\theta-\ve}(\Omega)\,,\quad 2\theta\in [0,2]\,,\quad 0<\ve\ll 1\,.
\end{equation}
 Owing to~\eqref{81} we may choose $2\theta,2\gamma\in (0,2)$ and $\ve\in (0,1)$ such that
$$
n/p-2\alpha+\ve \le 2\theta<2/(\ell+1)\,,\qquad   2\alpha+2\gamma+\ve<0\,.
$$ 
Then \eqref{interpol} yields
$$
L_p(\Omega)\hookrightarrow H_{p,\mathcal{B}}^{2\alpha+2\gamma+\ve}(\Omega)\hookrightarrow E_{\gamma}\,,\qquad
E_\theta\hookrightarrow  H_{p,\mathcal{B}}^{2\alpha+2\theta+\ve}(\Omega)\hookrightarrow L_\infty(\Omega)\,,
$$
so that ~\eqref{estim} entails
\begin{align*}
\|g(t,v)-g(t,w)\|_{E_\gamma}\le c\,\big(\|v\|_{E_\theta}^\ell+\|w\|_{E_\theta}^\ell\big)\,\|v-w\|_{E_\theta}\,,\quad v,w\in {E_\theta}\,.
\end{align*}
Now, Proposition~\ref{F1} implies that
\begin{align}\label{t2}
\|g(\cdot,v)-g(\cdot,w)\|_{C_{\nu}((0,T],E_\gamma)}\le c\,\big(\|v\|_{C_\theta((0,T],E_\theta)}^{\ell}+\|w\|_{C_\theta ((0,T],E_\theta)}^{\ell}\big)\,\|v-w\|_{C_\theta((0,T],E_\theta)}
\end{align}  
for $ v,w\in C_\theta((0,T],E_\theta)$ with $\nu:=\theta(\ell+1)\in (0,1)$. That is, $g$ satisfies~\eqref{A2}.
Consequently, considering~\eqref{exx} in the form
$$
u'=A_{p,\alpha}u+g(t,u)\,,\quad t\in (0,T]\,,\qquad a u(T)+\int_0^Tb(t)\,u(t)\,\rd t=M\,,
$$
in $E_0=H_{p,\mathcal{B}}^{2\alpha}(\Omega)$, it follows from~\eqref{t2a}~-~\eqref{t2} that we may apply Theorem~\ref{T0} to deduce that there is $m(T)>0$ such that for each 
$M\in H_{p,\mathcal{B}}^{2\alpha+2}(\Omega)$ with   $\|M\|_{H_{p}^{2\alpha+2}}\le m(T)$,
 problem~\eqref{exx} admits a unique strong solution
$$
u\in C^1\big((0,T],H_{p,\mathcal{B}}^{2\alpha}(\Omega)\big)\cap C\big((0,T],H_{p,\mathcal{B}}^{2\alpha+2}(\Omega)\big)\cap C\big([0,T],H_{p,\mathcal{B}}^{2\alpha}(\Omega)\big)\cap C_\theta\big((0,T],H_{p,\mathcal{B}}^{2\alpha+2\theta}(\Omega)\big)\,.
$$
To improve its regularity note that $u_\ve:=u(\ve+\cdot)\in C\big([0,T-\ve],H_{p,\mathcal{B}}^{2\alpha+2}(\Omega)\big)$ for $\ve \in (0,T-\ve)$ is a mild solution to the linear Cauchy problem
$$
u'=A_{p}u+g_\ve(t)\,,\quad t\in (0,T]\,,\qquad u_\ve(0)=u(\ve)\,,
$$
with $g_\ve(t):=g(\ve+t,u_\ve(t))$. 
Since $2\alpha+2>n/p$ and since $g\in C^{2}$, it follows e.g. from~\cite[Lemma~2.7]{WalkerEJAM} that $g_\ve\in  C\big([0,T-\ve],H_{p,\mathcal{B}}^{\mu}(\Omega)\big)$ for some $\mu \in (n/p,1)$. Hence,~\cite[II.Theo\-rem~1.2.2]{LQPP} yields
$$
u_\ve\in C^1\big((0,T-\ve],L_p(\Omega)\big)\cap C\big((0,T-\ve],W_{p,\mathcal{B}}^2(\Omega)\big)\cap C\big([0,T-\ve],L_p(\Omega)\big)\,,
$$
and since $\ve\in (0,T)$ was arbitrary, the assertion follows.
\end{proof}

Note that we use in the proof of Theorem~\ref{TTT} the interpolation-extrapolation scale in order to cope with the requirement of Theorem~\ref{T0} that $\gamma>0$. The latter enforces to shift the  operator $A_p\in \mathcal{H}\big(W_{p,\mathcal{B}}^2(\Omega),L_p(\Omega)\big)$ to negative spaces, i.e. to use  $A_{p,\alpha}\in\mathcal{H}\big(H_{p,\mathcal{B}}^{2\alpha+2}(\Omega),H_{p,\mathcal{B}}^{2\alpha}(\Omega)\big)$ with $\alpha<0$ instead.\\ 

In the Dirichlet case $\delta=0$ and if $\ell\in (0,1)$, one may take $2\alpha=-1$ for~$p$ sufficiently large (i.e. $p$ larger than $n(1+\ell)/(1-\ell)$) and then obtains a smallness condition on $M\in W_{p,D}^1(\Omega)=H_{p,D}^1(\Omega)$.\\

It is also worth pointing out that one may consider equations with gradient-dependent nonlinearities of the form
$$
\partial_t u-\mathrm{div}\big(d(x)\nabla u\big)+c(x) u = g(t,x,u,\nabla u)\,,\qquad t\in (0,T]\,,\quad x\in\Omega\,,
$$
instead of \eqref{81a}. One derives the same result as stated in Theorem~\ref{TTT} provided one replaces~\eqref{ggg} by
$$
\vert g(t,x,v,\eta)-g(t,x,w,\xi)\vert\le c\,\big(\vert v\vert^\ell+\vert w\vert^\ell+\vert \eta\vert^\ell+\vert \xi\vert^\ell\big)\,\big(\vert v-w\vert+\vert \eta-\xi\vert\big)\,, 
$$
for the function $g\in  C^{2}\big([0,T]\times\bar\Omega\times\R\times\R^n\big)$ with $\ell \in (0,1)$ and $g(t,x,0,0)=0$ and provided one replaces~\eqref{81} by
$$
\max\left\{ 1+\frac{n}{p}-\frac{2}{\ell+1}\,,\,-\frac{3}{2}+\delta\right\}<2\alpha< 0\,,\qquad 2\alpha\not=-1-\delta+\frac{1}{p}\,.
$$
The proof is the same.

\bibliographystyle{siam}
\bibliography{Literature}

\begin{thebibliography}{10}

\bibitem{AmannIsrael}
{\sc H.~Amann}, {\em Dual semigroups and second order linear elliptic boundary
  value problems}, Israel J. Math., 45 (1983), pp.~225--254.

\bibitem{Amann_Teubner}
\leavevmode\vrule height 2pt depth -1.6pt width 23pt, {\em {Nonhomogeneous
  linear and quasilinear elliptic and parabolic boundary value problems}}, in
  {Function spaces, differential operators and nonlinear analysis
  ({F}riedrichroda, 1992)}, vol.~133 of {Teubner-Texte Math.}, Teubner,
  Stuttgart, 1993, p.~9–126.

\bibitem{LQPP}
\leavevmode\vrule height 2pt depth -1.6pt width 23pt, {\em Linear and
  quasilinear parabolic problems. {V}ol. {I}}, vol.~89 of Monographs in
  Mathematics, Birkh\"{a}user Boston, Inc., Boston, MA, 1995.
\newblock Abstract linear theory.

\bibitem{AuEtal_JIEA_20}
{\sc V.~V. Au, Y.~Zhou, N.~H. Can, and N.~H. Tuan}, {\em Regularization of a
  terminal value nonlinear diffusion equation with conformable time
  derivative}, J. Integral Equations Appl., 32 (2020), pp.~397--416.

\bibitem{Dokuchaev4}
{\sc N.~Dokuchaev}, {\em Estimates for distances between first exit times via
  parabolic equations in unbounded cylinders}, Probab. Theory Related Fields,
  129 (2004), pp.~290--314.

\bibitem{Dokuchaev6}
\leavevmode\vrule height 2pt depth -1.6pt width 23pt, {\em Parabolic {I}to
  equations with mixed in time conditions}, Stoch. Anal. Appl., 26 (2008),
  pp.~562--576.

\bibitem{Dokuchaev7}
\leavevmode\vrule height 2pt depth -1.6pt width 23pt, {\em On prescribed change
  of profile for solutions of parabolic equations}, J. Phys. A, 44 (2011),
  pp.~225204, 7.

\bibitem{Dokuchaev8}
\leavevmode\vrule height 2pt depth -1.6pt width 23pt, {\em On forward and
  backward {SPDE}s with non-local boundary conditions}, Discrete Contin. Dyn.
  Syst., 35 (2015), pp.~5335--5351.

\bibitem{Dokuchaev}
\leavevmode\vrule height 2pt depth -1.6pt width 23pt, {\em On recovering
  parabolic diffusions from their time-averages}, Calc. Var. Partial
  Differential Equations, 58 (2019), pp.~Paper No. 27, 14.

\bibitem{EngelNagel}
{\sc K.-J. Engel and R.~Nagel}, {\em A short course on operator semigroups},
  Universitext, Springer, New York, 2006.

\bibitem{Guidetti91}
{\sc D.~Guidetti}, {\em On interpolation with boundary conditions}, Math. Z.,
  207 (1991), pp.~439--460.

\bibitem{MW24}
{\sc B.-V. Matioc and {\relax Ch}.~Walker}, {\em Well-posedness of quasilinear
  parabolic equations in time-weighted spaces}.
\newblock arXiv:2312.07974, 2023.

\bibitem{PSW18}
{\sc J.~Prüss, G.~Simonett, and M.~Wilke}, {\em Critical spaces for
  quasilinear parabolic evolution equations and applications}, J. Differential
  Equations, 264 (2018), p.~2028–2074.

\bibitem{TachEtal_MMAS23}
{\sc T.~N. Thach, N.~H. Can, and V.~V. Tri}, {\em Identifying the initial state
  for a parabolic diffusion from their time averages with fractional
  derivative}, Math. Methods Appl. Sci., 46 (2023), pp.~7751--7766.

\bibitem{TranEtal_MMAS20}
{\sc N.~Tran, V.~V. Au, Y.~Zhou, and N.~H. Tuan}, {\em On a final value problem
  for fractional reaction-diffusion equation with {R}iemann-{L}iouville
  fractional derivative}, Math. Methods Appl. Sci., 43 (2020), pp.~3086--3098.

\bibitem{TrietEtal_MMAS20}
{\sc N.~A. Triet, V.~V. Au, L.~D. Long, D.~Baleanu, and N.~H. Tuan}, {\em
  Regularization of a terminal value problem for time fractional diffusion
  equation}, Math. Methods Appl. Sci., 43 (2020), pp.~3850--3878.

\bibitem{TrietEtal_MMAS21}
{\sc N.~A. Triet, T.~T. Binh, N.~D. Phuong, D.~Baleanu, and N.~H. Can}, {\em
  Recovering the initial value for a system of nonlocal diffusion equations
  with random noise on the measurements}, Math. Methods Appl. Sci., 44 (2021),
  pp.~5188--5209.

\bibitem{TuanEtAl_MMAS20}
{\sc N.~H. Tuan, L.~N. Huynh, D.~Baleanu, and N.~H. Can}, {\em On a terminal
  value problem for a generalization of the fractional diffusion equation with
  hyper-{B}essel operator}, Math. Methods Appl. Sci., 43 (2020),
  pp.~2858--2882.

\bibitem{WalkerEJAM}
{\sc {\relax Ch}.~Walker}, {\em Global existence for an age and spatially
  structured haptotaxis model with nonlinear age-boundary conditions}, European
  J. Appl. Math., 19 (2008), pp.~113--147.

\bibitem{Yagi10}
{\sc A.~Yagi}, {\em Abstract parabolic evolution equations and their
  applications}, Springer Monographs in Mathematics, Springer-Verlag, Berlin,
  2010.

\end{thebibliography}
\end{document}